\documentclass[11pt]{amsart}
\usepackage{graphicx}
\usepackage{mathpazo}
\usepackage{latexsym}   
\usepackage{amssymb}    
\usepackage{amsmath}    
\usepackage{amsthm}     
\usepackage{hyperref}
\usepackage{xspace}



\usepackage[margin=1in]{geometry}

\numberwithin{equation}{section}
\newtheorem{theorem}{Theorem}[section]

\newtheorem{lemma}[theorem]{Lemma}

\newtheorem{proposition}[theorem]{Proposition}
\newtheorem{corollary}[theorem]{Corollary}

\theoremstyle{definition}

\usepackage{diagrams}
\newarrow{to}---->
\newarrow{dashedto}{}{dash}{}{dash}>
\newarrow{mto}|--->
\newarrow{impto}===={=>}
\newarrowtail{<=}\Rightarrow\Leftarrow\Uparrow\Downarrow
\newarrow{bboth}{<=}==={=>}
\newarrow{inject}{hooka}---{vee}
\newarrow{surject}----{>>}

\def\et{\text{\'et}}
\newcommand{\stk}[1]{{\mathcal #1}}
\def\btn{${\rm BT}_n$\xspace}
\def\btinfinity{${\rm BT}_\infty$\xspace}
\def\btone{${\rm BT}_1$\xspace}

\DeclareMathOperator{\twist}{Twist}
\DeclareMathOperator{\Isom}{Isom}
\DeclareMathOperator{\cent}{Z}
\def\aalpha{{\pmb\alpha}}
\def\dieu{{\mathbb D}}
\def\setcomp{\smallsetminus}
\def\bigoh{{\rm O}}
\renewcommand{\bar}[1]{{\overline{#1}}}

 \setlength{\parindent}{0in}
 \setlength{\parskip}{\medskipamount}

\def\ff{{\mathbb F}}
\def\integ{{\mathbb Z}}
\def\gp{{\mathbb G}}
\def\iso{\cong}
\def\ra{\rightarrow}
\def\sriso{\stackrel{\sim}{\rightarrow}}
\newcommand{\st}[1]{\left\{#1\right\}}
\def\mmu{{\pmb\mu}}
\newcommand{\oneover}[1]{\frac{1}{#1}}
\DeclareMathOperator{\im}{im}
\def\comp{\circ}
\newcommand{\dirlim}[1]{\lim_{\stackrel{\rightarrow}{#1}}}
\newcommand{\til}[1]{{\widetilde{#1}}}
\DeclareMathOperator{\aut}{Aut}
\DeclareMathOperator{\gal}{Gal}
\DeclareMathOperator{\defo}{Def}
\def\cross{\times}
\def\calg{{\mathcal G}}
\def\rat{{\mathbb Q}}
\newcommand{\rest}[1]{|_{#1}}
\newcommand{\abs}[1]{{\left|#1\right|}}
\def\inject{\hookrightarrow}
\def\cala{{\mathcal A}}
\def\calo{{\mathcal O}}
\DeclareMathOperator{\End}{End}
\def\units{^\cross}
\DeclareMathOperator{\mat}{Mat}
\DeclareMathOperator{\spec}{Spec}
\DeclareMathOperator{\gl}{GL}
\DeclareMathOperator{\jac}{Jac}
\def\cali{{\mathcal I}}
\def\inv{^{-1}}
\DeclareMathOperator{\ord}{ord}
\def\twiddle{\sim}
\DeclareMathOperator{\frob}{Fr}
\global\let\hom\undefined
\DeclareMathOperator{\hom}{Hom}

\newcommand{\ang}[1]{{{\langle #1 \rangle}}}

\newenvironment{alphabetize}{\begin{enumerate}

}{\end{enumerate}}

\begin{document}

\title{The distribution of torsion subschemes of abelian varieties}

\author{Jeffrey D. Achter}
\email{j.achter@colostate.edu}
\address{Department of Mathematics, Colorado State University, Fort
Collins, CO 80523} 
\urladdr{http://www.math.colostate.edu/~achter}
\thanks{This work was partially supported by grants from the Simons
  Foundation (204164) and the NSA (H98230-14-1-0161).}
  
  \begin{abstract}
  We consider the distribution of $p$-power group schemes among the torsion of abelian varieties over finite fields of characteristic $p$, as follows.  Fix natural numbers $g$ and $n$, and let $\xi$ be a non-supersingular principally quasipolarized Barsotti-Tate group of level $n$.  We classify the $\ff_q$-rational forms $\xi^\alpha$ of $\xi$.  Among all principally polarized abelian varieties $X/\ff_q$ of dimension $g$ with $X[p^n]_{\bar\ff_q} \iso \xi_{\bar\ff_q}$, we compute the frequency with which $X[p^n] \iso \xi^\alpha$.  The error in our estimate is bounded by $D/\sqrt q$, where $D$ depends on $g$, $n$ and $p$, but not on $\xi$.
  \end{abstract}

\maketitle

\section{Introduction}

Either the general philosophy of arithmetic statistics or the specific
question of Cohen-Lenstra heuristics for function fields can easily
lead one to wonder:
\begin{quotation}
How are group schemes distributed among the $N$-torsion group schemes
of principally polarized abelian varieties over a finite field?
\end{quotation}
More concretely, for various combinations of $g$, $q = p^e$, and $N$,
we would like to understand the multiset
\[
\st{ (X[N],\lambda_N) : (X,\lambda) \in \stk A_g(\ff_q) }.
\]
For example, suppose $N$ is relatively prime to $p$ and $g$ is fixed.
The $N$-torsion of the universal abelian scheme $\stk \stk \stk X[N] \ra \stk
A_g$ is a local system of $(\integ/N)$-modules, and the scheme of isomorphisms
$\Isom((\stk X[N],\lambda_N), ((\integ/N)^{\oplus
  2g},\ang{\cdot,\cdot}))$ is an \'etale, Galois cover of $\stk A_g$.
A monodromy calculation shows that this cover is irreducible; the
Chebotarev theorem then gives good control, as $q\ra\infty$, of the
distribution of forms of $(\integ/N)^{\oplus 2g}$.  (See, e.g.,  \cite{achtercag}.)


The landscape looks dramatically different if, instead of considering
prime-to-$p$ torsion, one analyzes $p^n$-torsion.  For example,
there are several different geometric isomorphism classes of group
scheme which arise as $X[p]$.  Perhaps worse yet, there are
  infinitely many different geometric isomorphism classes of group
schemes which arise as $X[p^2]$ over $\bar\ff_p$.  Therefore, a direct
strategy based on local systems and the Chebotarev theorem seems
difficult to implement.

Still, not all is lost.  In \cite{caisetal13}, the authors
fix a base field $\ff_q$ and describe a natural measure on the space
of principally quasipolarized group schemes of rank $2g$ which are
killed by $p$.  They then are able to compute the large-$g$
expectation of various events, such as having specified $a$-number or
$p$-rank.  Unfortunately, it is unknown whether this
measure accurately describes the distribution of the $p$-torsion group
schemes of abelian varieties, although numerical experiments with hyperelliptic Jacobians are encouraging.

In the present paper, we take a complementary, ``fixed $g$, large
$q$'' approach.  An initial, na\"ive attempt fails to detect many of
the interesting structures on $\stk A_g$ in positive characteristic.
The ordinary locus is open and dense, and so in the large $q$ limit,
(essentially) all $X[p]$ are forms of $(\mmu_p \oplus \integ/p)^g$.
To detect more subtle variations, we {\em condition} on geometric
isomorphism class, as follows.

Let $\xi$ be a geometric isomorphism class of principally
quasipolarized \btn; let $\stk A_{g,\xi}$ be the locus where
$\stk X[p^n]$ is geometrically isomorphic to $\xi$.  We will show that
there is a finite group $A(\xi)$ whose conjugacy classes
$A(\xi)^\sharp$ parametrize, for  suitable
fields $\ff_q$, $\ff_q$-rational forms of $\xi$. 
We essentially show (see Theorem \ref{thmainacirc} for details) that, for each $\alpha
\in A(\xi)^\sharp$, the proportion of elements of $(X,\lambda) \in \stk
A_{g,\xi}(\ff_q)$ for which $(X,\lambda)[p^n] \iso \xi^\alpha$
approaches $1/\#Z_{A(\xi)}(\alpha)$ in a surprisingly uniform way.  In
fact, 
\begin{equation}
\label{eqmainintro}
\frac{\#\st{ (X,\lambda) \in \stk A_{g,\xi}(\ff_q) : (X,\lambda)[p^n] \iso
  \xi^\alpha}}{\# \stk A_{g,\xi}(\ff_q)} = \oneover{\#Z_{A(\xi)}(\alpha)}
+\bigoh_{g,n,p}(1/\sqrt q);
\end{equation}
the error is bounded by $D/\sqrt q$ for some constant $D$ which
depends only on $g$, $n$ and $p$.  Thanks to Lemma \ref{lemauttwist}, this is compatible with the broad philosophy of \cite{caisetal13}, and even that of \cite{cohenlenstra}; a (polarized) group (scheme) occurs with frequency inversely proportional to the size of its automorphism group.

Our analysis starts (Section \ref{secpointwise}) with a classification of
the possible $\ff_q$-rational forms of a given principally
quasipolarized \btn $\xi$.  One fact which emerges is that there is a
finite set which controls such forms, independent of the base field.
  In this way, it is possible to compare forms of $\xi$ over
ever-larger finite extensions of the base field.

In Section \ref{secstratifications} we review (and, where necessary,
extend) results, largely due to Oort and to Chai and Oort, on the
structure of the strata $\stk A_{g,\xi}$.  It turns out that, although
there are infinitely many such strata (if $n \ge 2$), their topology
(if $n\gg_g 0$) may be uniformly bounded in terms of $g$, $n$ and $p$.

We apply these results in Section \ref{secdistribution} to state and
prove our main equidistribution result.  As is often the case, a geometric
Chebotarev theorem is the crucial guarantor of equidistribution, but
it is invoked here in a somewhat novel way.    If $n$ is small
relative to $g$, then there is often no single local system on $\stk
A_{g,\xi}$ which controls the variation in the forms of $\xi$.  Still,
we express $\stk A_{g,\xi}$ as a disjoint union of infinitely many
central leaves $\stk A_{g,\til\xi}$.  (Of course, only finitely many
of them are defined over a given finite field.)  Since each of {\em these} has
bounded topology,  we are able to deduce the uniform bound asserted in \eqref{eqmainintro}.

  The paper concludes (Section
\ref{secexample}) with a series of examples, and a brief discussion of
related results.

The present investigation was inspired by the work of Cais, Ellenberg
and Zureick-Brown, even though the results here are incomparable with
those of \cite{caisetal13}.  The author acknowledges helpful
discussions with Oort and Liedtke, especially concerning the example in  Section
\ref{subsecexss}.  In particular, the explicit classification of
Dieudonn\'e modules there was communicated to
the author by Oort.  Finally, this paper benefited from the referee's careful reading.

\section{Forms of a \texorpdfstring{\btn}{BTn}}
\label{secpointwise}

\subsection{Notation}

We will work over finite fields of characteristic $p>0$; in
particular, $q$ is always a power of $p$.  Let $\ff$ be an algebraic
closure of $\ff_p$.

Let $K$ be a perfect field of characteristic $p$, and let $n$ be a
natural number.  A Barsotti-Tate group of level $n$, or  \btn, over $K$
is a commutative finite group scheme $G/K$ which is annihilated by
$p^n$ and flat over $\integ/p^n$.  (If $n= 1$, one further insists
that $\im F: G \ra G^{(p)} = \ker V: G^{(p)} \ra G$.)  A
Barsotti-Tate, or $p$-divisible, group is an inductive limit $G =
\dirlim{n} G_n$ where $G_n$ is a \btn and $[p]$ is an epimorphism; we
will sometimes refer to such an object as a \btinfinity.

Let $G$ be a \btn.  The Cartier dual of $G$ is $G^D = \hom(G,\gp_m)$.
If $p> 2$, a principal quasipolarization on $G$ is an isomorphism
$\lambda:G \sriso G^D$ such that $\lambda^D = -\lambda\comp \kappa$,
where $\kappa: G^{DD} \sriso G$ is the canonical isomorphism.  (In
characteristic $2$, this definition must be modified slightly; see
\cite[2.6]{moonengsas} or \cite[6.3]{gabbervasiu} for more details.) The pair
$(G,\lambda)$ is called a principally quasipolarized, or pqp, group
scheme (or \btn).

Define:
\begin{itemize}
\item $\Xi_{g,n}(\ff)$ for $1 \le n \le \infty$ is the set of
  $\ff$-isomorphism classes of principally quasipolarized \btn's of
  height $2g$ over $\ff$.

\item For $\xi \in \Xi_{g,n}(\ff)$, let $\xi(\ff_q)$ be the set of
  $\ff_q$-isomorphism classes of pqp group schemes $(G,\lambda)/\ff_q$
  such that the class of $(G,\lambda)_{\bar\ff_q}$ in $\Xi(\ff)$ is $\xi$.
\end{itemize}
For each of these sets  we will write, e.g., $(G,\lambda) \in
\xi(\ff_q)$ (or even $G \in \xi(\ff_q)$) when
we really mean a representative for the $\ff_q$-isomorphism class.  For $1 \le n \le m \le \infty$ there is an obvious truncation functor $\Xi_{g,m}(\ff) \ra \Xi_{g,n}(\ff)$, which we will denote by $\til \xi \mapsto \til \xi[p^n]$.  Let $\Xi_{g,m}(\ff)_\xi$ be the fiber of this map over $\xi$.

For an abstract group $\Gamma$, let $\Gamma^\sharp$ be the set of
conjugacy classes of $\Gamma$, and let $\cent(\Gamma)$ be the center
of $\Gamma$.

\subsection{Automorphism groups and forms}

Let $K$ be a finite field of characteristic $p$, let $n$ be a positive integer, and let
$(G,\lambda)/K$ be a pqp-\btn.  Its automorphism group scheme $\aut_{G,\lambda}$ is affine,
 and sits in the usual \'etale-connected sequence
\begin{diagram}[LaTeXeqno]
\label{diagconnetale}
0 & \rto & \aut_{G,\lambda}^0 & \rto  & \aut_{G,\lambda} & \rto & \pi_0(\aut_{G,\lambda}) & \rto & 0.
\end{diagram}
Then $\pi_0(\aut_{G,\lambda})(\bar K)$ is a $\gal(K)$-module.  We will find it
convenient to call $(G,\lambda)$ split if the group $\pi_0(\aut_{G,\lambda})(\bar K)$ has trivial
$\gal(K)$-structure, i.e., if $\pi_0(\aut_{G,\lambda})(K) = \pi_0(\aut_{G,\lambda})(\bar 
K)$.  For a split pqp group scheme  $(G,\lambda)/K$, let
$A_{G,\lambda}$ be the abstract, finite group
$\pi_0(\aut_{G,\lambda})(K)$.  

Since $K$ is perfect, \eqref{diagconnetale} splits canonically, and we
sometimes view $A_{G,\lambda}$ as a (finite, discrete) group of
automorphisms of $(G,\lambda)$.

For a finite extension $L/K$, let $\twist(L/K,(G,\lambda))$ be the set of
$K$-isomorphism classes of pqp group schemes $(H,\mu)/K$ such that $(H,\mu)_L\iso
(G,\lambda)_L$. On one hand, $\twist(L/K,(G,\lambda))$ is a pointed set, with distinguished
element the $K$-isomorphism class of $(G,\lambda)$ itself.  On the
other hand, we endow $A_{G,\lambda}^\sharp$ with the structure of a
pointed set by distinguishing (the conjugacy class of) the identity
element. Let $\twist(K,(G,\lambda)) =
\cup_{L/K} \twist(L/K,(G,\lambda))$.

\begin{lemma}
\label{lemclassifytwists}
Let $(G,\lambda)/K$ be a split pqp group scheme over a finite field.  There is a bijection of pointed sets
between
\begin{itemize}
\item $\twist(K,(G,\lambda))$; and
\item $A_{G,\lambda}^\sharp$.
\end{itemize}
\end{lemma}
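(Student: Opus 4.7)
The plan is to invoke the standard non-abelian Galois-descent dictionary to convert the classification of forms into a cohomology computation, and then to use the splitness hypothesis to evaluate that cohomology.

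First, for each finite Galois extension $L/K$, the standard twist-cohomology bijection for pqp group schemes identifies, as pointed sets,
\[
\twist(L/K,(G,\lambda)) \sriso H^1(\gal(L/K),\aut_{G,\lambda}(L)).
\]
Passing to the colimit over finite $L/K$ identifies $\twist(K,(G,\lambda))$ with the continuous cohomology $H^1(\gal(\bar K/K),\aut_{G,\lambda}(\bar K))$.

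Next, I would simplify the coefficient group. Since $\aut^0_{G,\lambda}$ is a connected finite group scheme over the perfect field $K$, it is infinitesimal, so $\aut^0_{G,\lambda}(\bar K)=\{e\}$; combined with the canonical splitting of \eqref{diagconnetale} this yields $\aut_{G,\lambda}(\bar K)=\pi_0(\aut_{G,\lambda})(\bar K)$, which by the splitness hypothesis is the abstract finite group $A_{G,\lambda}$ carrying trivial Galois action. Since $\gal(\bar K/K)\iso\hat\integ$ is topologically cyclic, continuous $1$-cocycles into a finite group with trivial action are determined by their value on the Frobenius generator, and the coboundary relation specializes to conjugation; hence $H^1(\gal(\bar K/K),A_{G,\lambda}) \iso A_{G,\lambda}^\sharp$, with the trivial cohomology class mapping to the identity conjugacy class, giving the desired bijection of pointed sets.

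The main obstacle lies in justifying the first step: that non-abelian Galois descent applies to the category of pqp group schemes and yields the stated $H^1$ set. This is a standard fpqc/\'etale descent argument applied to the underlying affine $K$-schemes, combined with the observation that a principal quasipolarization on $G_L$ descends to $K$ exactly when it is compatible with the given descent datum; the representability input is that $\aut_{G,\lambda}$ is a $K$-group scheme, which follows from general representability results for automorphism functors of finite flat group schemes. Modulo this verification the remaining computation is routine.
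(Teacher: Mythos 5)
Your first and last steps are correct, and essentially agree with the paper: the bijection $\twist(L/K,(G,\lambda)) \iso H^1(\gal(L/K),\aut_{G,\lambda}(L))$ is the same appeal to Serre's twist formalism, and the identification of $H^1$ of a procyclic group acting trivially on a finite group with the set of conjugacy classes is the same standard computation. The problem is the middle step, where you claim that $\aut^0_{G,\lambda}$ is a \emph{finite} group scheme, hence infinitesimal, hence has trivial group of $\bar K$-points. That claim is false. The automorphism group scheme of a pqp \btn is affine of finite type, but its connected component is in general of positive dimension; indeed, Lemma~\ref{lemagxistrat} records that $\stk A_{g,\xi}$ has codimension $\dim\aut(\xi)$ in $\stk A_g$, which is positive for every non-ordinary $\xi$. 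Concretely, Section~\ref{subsecexminimal} shows that $\aut(H_{1,2}[p])^0$ is a \emph{two-dimensional unipotent} group, and Section~\ref{subsecexpqpminimal} shows that the pqp examples inherit this positive-dimensional connected part. For such a group $\aut^0_{G,\lambda}(\bar K)$ is infinite, so your reduction $\aut_{G,\lambda}(\bar K)=A_{G,\lambda}$ simply does not hold, and with it the entire passage from $H^1(\gal(\bar K/K),\aut_{G,\lambda}(\bar K))$ to $A_{G,\lambda}^\sharp$ collapses.

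The correct way to pass from the automorphism group to its group of components in cohomology --- and this is what the paper does --- is to take the long exact sequence in nonabelian Galois cohomology attached to the connected--\'etale sequence at each finite level $L/K$:
\[
H^1(L/K,\aut^0_{G,\lambda}(L)) \ra H^1(L/K,\aut_{G,\lambda}(L)) \ra H^1(L/K,\pi_0(\aut_{G,\lambda})(L)) \ra H^2(L/K,\aut^0_{G,\lambda}(L)).
\]
The left-hand term vanishes by Lang's theorem applied to the \emph{connected} affine group $\aut^0_{G,\lambda}$ over a finite field, not because its group of points is trivial; and the right-hand term vanishes because a finite field has cohomological dimension at most one. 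It is this pair of vanishings, rather than any finiteness of $\aut^0_{G,\lambda}$, that yields $H^1(L/K,\aut_{G,\lambda}(L)) \iso H^1(L/K,A_{G,\lambda})$. With that substitution your remaining steps go through; note also that the paper's version, by working level-by-level, additionally records that twists realized over $L$ correspond to conjugacy classes of elements of order dividing $[L:K]$, a refinement you lose by passing immediately to the colimit.
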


\begin{proof}
It suffices to prove that for each finite extension $L/K$, there is a
bijection of pointed sets between $\twist(L/K,(G,\lambda))$ and conjugacy classes of elements of
$A_{G,\lambda}$ of order dividing $[L:K]$.
By \cite[III.1.3]{serregalcoh}, there is a bijection between
$\twist(L/K,(G,\lambda))$ and $H^1(L/K,\aut_{G,\lambda}(L))$.  Consider the exact
sequence of groups
\begin{diagram}
0 & \rto & \aut_{G,\lambda}^0(L) & \rto & \aut_{G,\lambda}(L) & \rto & \pi_0(\aut_{G,\lambda})(L)
& \rto &0
\end{diagram}
and the corresponding long exact sequence of Galois cohomology groups
\begin{diagram}
H^1(L/K, \aut_{G,\lambda}^0(L)) & \rto & H^1(L/K, \aut_{G,\lambda}(L)) & \rto
& H^1(L/K, \pi_0(\aut_{G,\lambda})(L)) & \rto & H^2(L/K, \aut_{G,\lambda}^0(L)).
\end{diagram}
The left-most term is zero by Lang's theorem; the rightmost term
is trivial, since $K$ is a finite field and thus has cohomological
dimension $\le 1$.  Since $G$ is split, $\pi_0(\aut_{G,\lambda})(L)\iso A_{G,\lambda}$,
and thus $H^1(L/K,\aut_{G,\lambda}(L)) \iso H^1(L/K, A_{G,\lambda})$.  Moreover,
$\gal(L/K)$ is cyclic and $A_{G,\lambda}$ is trivial as $\gal(L/K)$-module,
whence the asserted description of $H^1(L/K,A_{G,\lambda})$.
\end{proof}

In particular, (the isomorphism class of) a twist $(H,\mu)\in \twist(K,(G,\lambda))$
is represented by some $\alpha \in A_{G,\lambda}$, well-defined up to conjugacy.

\begin{lemma}
\label{lemauttwist}
Suppose $H \in \twist(K,(G,\lambda))$ is represented by $\alpha \in
A_{G,\lambda}$.  
\begin{alphabetize}
\item We have
\[
\pi_0(\aut_{H,\mu})(K) \iso \cent_{A_{G,\lambda}}(\alpha).
\]
\item If
$L$ is a finite extension of $K$, then $H_L \in \twist(L,(G,\lambda)_L)$ is
represented by $\alpha^{[L:K]} \in A_{(G,\lambda)_L} \iso
A_{G,\lambda}$.
\end{alphabetize}
\end{lemma}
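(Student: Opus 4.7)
The plan is to use the standard twisting formalism from Galois cohomology (as already invoked in the proof of Lemma \ref{lemclassifytwists}). Recall that if $H = {}_c G$ is the twist of $G$ by a cocycle $c \in Z^1(K,\aut_{G,\lambda})$, then $\aut_{H,\mu}$ is canonically isomorphic, as a $K$-group scheme, to the inner twist ${}_c\aut_{G,\lambda}$ — the same underlying scheme over $\bar K$ equipped with the $\gal(K)$-action obtained by composing the original action with inner conjugation by $c$.

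For part (a), I apply this to the étale quotient. Under the identification $\pi_0(\aut_{G,\lambda})(\bar K) = A_{G,\lambda}$, the original $\gal(K)$-action is trivial because $(G,\lambda)$ is split. The cocycle representing the twist $(H,\mu)$ sends the arithmetic Frobenius $\sigma$ of $K$ to (a lift of) $\alpha$, so the twisted $\gal(K)$-action on $A_{G,\lambda}$ has $\sigma$ act as $x \mapsto \alpha x \alpha^{-1}$. Since $A_{H,\mu} = \pi_0(\aut_{H,\mu})(K)$ is the subgroup of elements fixed by this twisted action, I get
\[
A_{H,\mu} = \{ x \in A_{G,\lambda} : \alpha x \alpha^{-1} = x \} = \cent_{A_{G,\lambda}}(\alpha),
\]
as claimed. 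Note that once this identification is in place, it also justifies writing $A_{(G,\lambda)_L} \iso A_{G,\lambda}$ in part (b), since the étale group scheme $\pi_0(\aut_{G,\lambda})$ stays constant under base change to any $L\supseteq K$ over which it is already split.

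For part (b), the representing element of $H_L$ in $A_{(G,\lambda)_L} \iso A_{G,\lambda}$ is obtained from the bijection of Lemma \ref{lemclassifytwists} applied over $L$: one restricts the defining cocycle along $\gal(\bar K/L) \hookrightarrow \gal(\bar K/K)$ and reads off its value on a generator of $\gal(L/K)_{\text{ab, residual}}$. The Frobenius $\sigma_L$ of $L$ is $\sigma_K^{[L:K]}$, and the cocycle relation together with the triviality of the $\gal(K)$-action on $A_{G,\lambda}$ gives
\[
c(\sigma_K^{[L:K]}) = c(\sigma_K)\cdot \sigma_K(c(\sigma_K)) \cdots \sigma_K^{[L:K]-1}(c(\sigma_K)) = \alpha^{[L:K]}.
\]
Hence the restricted cocycle sends Frobenius over $L$ to $\alpha^{[L:K]}$, which is precisely the element representing $H_L$.

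There is no real obstacle here beyond bookkeeping: everything follows from the standard identification of a twist's automorphism group with the twisted automorphism group, and the behavior of one-cocycles into a trivially-acted-upon abelian-by-conjugation module under restriction to an open subgroup. The only point that needs care is to verify that inner conjugation by a lift of $\alpha$ in $\aut_{G,\lambda}(\bar K)$ descends to the well-defined conjugation-by-$\alpha$ action on $A_{G,\lambda}$ (independent of the lift), which is automatic because $A_{G,\lambda}^0(\bar K)$ is central in $\aut_{G,\lambda}(\bar K)$ with respect to its image in $A_{G,\lambda}$ — or, equivalently, because the action of $A_{G,\lambda}$ on itself by conjugation factors through the quotient.
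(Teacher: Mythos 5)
Your proof is correct and follows essentially the same route as the paper: part (a) is exactly the content of the result in Serre that the paper cites (the automorphism group of a twist is the inner twist, so the $K$-points of $\pi_0$ are the fixed points of conjugation by $\alpha$, i.e.\ $\cent_{A_{G,\lambda}}(\alpha)$), and part (b) is the same observation that the Frobenius of $L$ is the $[L:K]^{\rm th}$ power of the Frobenius of $K$, unwound via the cocycle relation. You have merely spelled out the details the paper delegates to the reference, so no further comparison is needed.
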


\begin{proof}
Part (a) is a special case of a more general statement worked out in
\cite[I.5.3.34]{serregalcoh}; part (b) follows immediately from the fact
that the $[L:K]^{{\rm th}}$ power of a topological generator of
$\gal(K)$ is a topological generator of $\gal(L)$.
\end{proof}

In particular, in the context of Lemma \ref{lemauttwist}, $(H,\mu)$ is
split if and only if $\alpha \in \cent(A_{G,\lambda})$.  We will call such an
$(H,\mu)$ a central twist of $(G,\lambda)$.

\begin{lemma}
\label{lemtwistrs}
Let $\xi \in \Xi_{g,n}(\ff)$.  Suppose $(G,\lambda) \in \xi(\ff_{q^r})$ and
$(H,\mu)\in \xi(\ff_{q^s})$ are split and that $\gcd(r,s) = 1$.  Then there
are split $(G',\lambda') \in \xi(\ff_{q^r})$ and $(H',\mu') \in \xi(\ff_{q^s})$ such that 
$(G',\lambda')_{\ff_{q^{rs}}} \iso
(H',\mu')_{\ff_{q^{rs}}}$.
\end{lemma}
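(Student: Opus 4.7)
My plan is to reduce the statement to a group-theoretic equation inside the abelian group $\cent(A)$, where $A$ is the common automorphism component group attached to $\xi$, and then solve that equation using Bezout's identity.

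I first base-change to the compositum $\ff_{q^{rs}}$: set $Y := (G,\lambda)_{\ff_{q^{rs}}}$ and $Y' := (H,\mu)_{\ff_{q^{rs}}}$. Both remain split, since splitness---a triviality condition on the Galois action on $\pi_0(\aut)(\ff)$---persists under restriction to any finite-index subgroup. Denote the common abstract automorphism component group by $A$. Applying Lemma \ref{lemclassifytwists} over $\ff_{q^{rs}}$, I may write $Y' \iso Y^\beta$ for some $\beta \in A$, well-defined up to conjugation; since $Y'$ is split, the remark immediately following Lemma \ref{lemauttwist} forces $\beta \in \cent(A)$.

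I then look for the desired forms in the shape
\[
(G',\lambda') := (G,\lambda)^\gamma, \qquad (H',\mu') := (H,\mu)^\delta, \qquad \gamma,\delta \in \cent(A),
\]
the centrality ensuring both are split. Lemma \ref{lemauttwist}(b), applied to the extensions $\ff_{q^{rs}}/\ff_{q^r}$ and $\ff_{q^{rs}}/\ff_{q^s}$, gives $(G',\lambda')_{\ff_{q^{rs}}} \iso Y^{\gamma^s}$ and $(H',\mu')_{\ff_{q^{rs}}} \iso (Y')^{\delta^r}$. Composing twists via the identity $(Y^\beta)^\epsilon \iso Y^{\epsilon\beta}$---which drops out of the definition of the twisted Frobenius---rewrites the latter as $Y^{\delta^r \beta}$. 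Invoking Lemma \ref{lemclassifytwists} once more, the sought isomorphism holds exactly when $\gamma^s$ and $\delta^r \beta$ are $A$-conjugate; because all three elements lie in $\cent(A)$, conjugacy degenerates to equality, and the condition becomes
\[
\gamma^s \delta^{-r} = \beta \quad \text{in} \quad \cent(A).
\]

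Since $\cent(A)$ is abelian and $\gcd(r,s) = 1$, Bezout supplies integers $a,b$ with $as - br = 1$. Taking $\gamma := \beta^a$ and $\delta := \beta^b$ then yields $\gamma^s \delta^{-r} = \beta^{as - br} = \beta$, completing the construction. The one delicate point in the plan is the composition-of-twists identity $(Y^\beta)^\epsilon \iso Y^{\epsilon\beta}$ used above; once this is in hand, every other step is a direct application of Lemmas \ref{lemclassifytwists} and \ref{lemauttwist}.
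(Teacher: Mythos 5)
Your proof is correct and is essentially the paper's own argument: base-change to $\ff_{q^{rs}}$, identify the relative twist by a central element $\beta$, use Lemma \ref{lemauttwist}(b) and composition of twists to reduce to the equation $\gamma^s = \delta^r\beta$ in $\cent(A)$ (the paper's $sb = rc + a$ in additive notation), and solve it using $\gcd(r,s)=1$. The only cosmetic differences are that you make the coprimality step explicit via Bezout where the paper calls it obvious, and the composition-of-twists identity you flag as delicate is exactly what the paper handles by citing Serre, \emph{Galois cohomology} I.5.3.35.
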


\begin{proof}
Since $(G,\lambda)_{\ff_{q^{rs}}}$ and $(H,\mu)_{\ff_{q^{rs}}}$ are both split,
$(H,\mu)_{\ff_{q^{rs}}}$ is a central twist of $(G,\lambda)_{\ff_{q^{rs}}}$; there is
some $a \in \cent(A_{G,\lambda}) \subset A_{G,\lambda}$ such that twisting
$(G,\lambda)_{\ff_{q^{rs}}}$ by the cocycle $\sigma \mapsto a$ yields
a pqp group
scheme isomorphic to $(H,\mu)_{\ff_{q^{rs}}}$.  A split twist $(G',\mu')$ of 
$(G,\mu)_{\ff_{q^s}}$ corresponds to some $b \in \cent(A_{G,\lambda})$.  Using (in
this lemma only) additive notation for $\cent(A_{G,\lambda})$, we find that
$(G',\mu')_{\ff_{q^{rs}}}$ corresponds to $sb$ (Lemma \ref{lemauttwist}(b)).   Similarly, if $(H',\mu')$ is a split twist
of $(H,\mu)$ corresponding to $c \in \cent(A_{H,\mu})$, then $(H',\mu')_{\ff_{q^{rs}}}$ is
represented by $sc$ as a twist of $(H,\mu)_{\ff_{q^{rs}}}$, and by $rc+a$ as
a twist of $(G,\lambda)_{\ff_{q^{rs}}}$ \cite[I.5.3.35]{serregalcoh}.  Consequently, in order to prove the
lemma, we must show that for any $a \in \cent(A_{G,\lambda})$, there exist $b,c
\in \cent(A_{G,\lambda})$ such that
\[
rc+a = sb.
\]
If we write $\cent(A_{G,\lambda}) \iso \oplus \integ/m_i$ and recall that $r$
and $s$ are relatively prime, the existence of $b$ and $c$ is obvious.
\end{proof}

\begin{proposition}
\label{propminfield}
Let $\xi \in \Xi_{g,n}(\ff)$.  There exists a finite field
$\ff_{q(\xi)}$ and a split pqp group scheme $(G,\lambda)\in \xi(\ff_{q(\xi)})$ such
that, if $(H,\mu) \in \xi(\ff_q)$ is split, then $\ff_q$ contains
$\ff_{q(\xi)}$ and $(H,\mu)$ is a central twist of $(G,\lambda)_{\ff_q}$.
\end{proposition}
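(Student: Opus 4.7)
The plan is to identify $\ff_{q(\xi)}$ as the smallest finite subfield of $\ff$ over which $\xi$ admits a split representative, and to verify that such a minimum field exists and carries one. Let $S = \{e \ge 1 : \xi(\ff_{p^e}) \text{ contains a split representative}\}$. First, $S$ is nonempty: a spreading-out argument provides a (possibly non-split) representative $(G_1, \lambda_1) \in \xi(\ff_{q_1})$, and the Galois action of $\gal(\ff_{q_1})$ on the finite \'etale group $\pi_0(\aut_{G_1, \lambda_1})$ factors through a cyclic quotient of some order $d$, so that $(G_1, \lambda_1)_{\ff_{q_1^d}}$ is split. Moreover $S$ is closed upward under divisibility because base change preserves splitness.

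The key step is to show that $S$ is closed under gcd, which combined with the preceding properties forces $S = e_0 \cdot \integ_{>0}$ for its least element $e_0$. Given $e_1, e_2 \in S$, set $e_0' = \gcd(e_1, e_2)$, $q = p^{e_0'}$, and write $e_i = r_i e_0'$ with $\gcd(r_1, r_2) = 1$, so $\ff_{p^{e_i}} = \ff_{q^{r_i}}$. Lemma~\ref{lemtwistrs}, applied to split representatives over $\ff_{q^{r_1}}$ and $\ff_{q^{r_2}}$, produces split $(G', \lambda') \in \xi(\ff_{q^{r_1}})$ and $(H', \mu') \in \xi(\ff_{q^{r_2}})$ whose base changes to $\ff_{q^{r_1 r_2}}$ become isomorphic. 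Since $\gcd(r_1, r_2) = 1$ gives $\ff_{q^{r_1}} \otimes_{\ff_q} \ff_{q^{r_2}} = \ff_{q^{r_1 r_2}}$ and the cover $\spec \ff_{q^{r_1}} \sqcup \spec \ff_{q^{r_2}} \to \spec \ff_q$ is faithfully flat, fpqc descent assembles $(G', \lambda')$ and $(H', \mu')$ into a representative $(K, \nu) \in \xi(\ff_q)$. Its $\ff_{q^{r_i}}$-base changes being split means that Frobenius $F_q$ acts on $\pi_0(\aut_{K, \nu})(\ff)$ with $F_q^{r_1} = F_q^{r_2} = 1$, and Bezout then forces $F_q = 1$, so $(K, \nu)$ is itself split and $e_0' \in S$.

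Setting $\ff_{q(\xi)} = \ff_{p^{e_0}}$ with any split $(G, \lambda) \in \xi(\ff_{q(\xi)})$, the rest follows quickly. For split $(H, \mu) \in \xi(\ff_q)$, the minimality of $e_0$ forces $\ff_q \supseteq \ff_{q(\xi)}$; both $(H, \mu)$ and $(G, \lambda)_{\ff_q}$ are split forms of $\xi$ over $\ff_q$, so Lemma~\ref{lemclassifytwists} exhibits $(H, \mu)$ as a twist of $(G, \lambda)_{\ff_q}$ represented by some $\alpha \in A_{G, \lambda}^\sharp$, and the remark following Lemma~\ref{lemauttwist} identifies the splitness of $(H, \mu)$ with $\alpha \in \cent(A_{G, \lambda})$, as claimed. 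The step requiring the most care is the fpqc descent in paragraph two: one must check that the semilinear $\gal(\ff_{q^{r_1 r_2}}/\ff_{q^{r_i}})$-actions on the common base change carried by $(G', \lambda')$ and $(H', \mu')$, once identified via the gluing isomorphism, assemble into a single $\gal(\ff_{q^{r_1 r_2}}/\ff_q)$-action. This reduces to verifying that the two subgroup actions commute, and an adjustment of the gluing by a central automorphism, in the spirit of the arithmetic in the proof of Lemma~\ref{lemtwistrs}, may be needed to arrange this.
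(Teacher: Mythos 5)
Your proof takes essentially the same route as the paper's: reduce via Lemma~\ref{lemtwistrs} to split representatives over $\ff_{q^r}$ and $\ff_{q^s}$ with isomorphic base changes to $\ff_{q^{rs}}$, then glue these by (fpqc or equivalently Galois) descent to $\ff_q$. Your explicit check that the descended object is itself split (Bezout applied to the Frobenius action on $\pi_0$) and your flagging of the compatibility of the two semilinear subgroup actions spell out points that the paper compresses into the phrase ``canonically isomorphic,'' but the underlying argument is the same.
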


\begin{proof}
Given Lemma \ref{lemtwistrs}, it suffices to prove that if $\gcd(r,s) = 1$, and
if there are split pqp group schemes $(G_r,\lambda_r) \in
\xi(\ff_{q_r})$ and $(G_s,\lambda_s) \in \xi(\ff_{q^s})$ which are
isomorphic to the pqp group scheme $(G,\lambda) \in
\xi(\ff_{q^{rs}})$, then $(G,\lambda)$ descends to $\ff_q$.  Let
$\sigma$ generate $\gal(\ff_{q^{rs}}/\ff_q)$; by hypothesis,
$(G,\lambda)$ is canonically isomorphic to its pullbacks by $\sigma^s$
and $\sigma^r$, and thus is invariant under
$\gal(\ff_{q^{rs}}/\ff_q)$.  Descent is effective for affine group
schemes, and the proposition is proved.
\end{proof}

Set
\[
A(\xi)= A_{G,\lambda},
\]
where $(G,\lambda)\in \xi(\ff_q)$ is split.  As above, we note that
$A(\xi)$ acts on $(G,\lambda)$.

If $\alpha \in A(\xi)$ and if $\ff_q$ contains $\ff_{q(\xi)}$, let
\[
\xi^{\alpha,\ff_q} \in \xi(\ff_q)
\]
denote the form of $\xi$ corresponding to the conjugacy class of
$\alpha$ in $A(\xi)$.  

This notation is slightly ambiguous; the
isomorphism class of $\xi^{\alpha,\ff_q}$ actually depends on the
choice of split model $(G,\lambda)$ for $\xi$.  Still, suppose that
$(G_1,\lambda_1)$ and $(G_2,\lambda_2)$ are split models for $\xi$.
Recall (as seen in
Lemma \ref{lemauttwist}) that, after a choice of isomorphism
$A_{G_1,\lambda_1}\ra A_{G_2,\lambda_2}$ of abstract groups, the
bijection between $\twist(\ff_q, (G_1,\lambda_1))$ and
$\twist(\ff_q,(G_2,\lambda_2))$ is realized by
\begin{diagram}
A_{G_1,\lambda_1}^\sharp & \rto & A_{G_2,\lambda_2}^\sharp \\
[\alpha] & \rmto & [\gamma\alpha]
\end{diagram}
where $\gamma$ is some element of the center.  Now suppose that
$(H,\mu)$ is an $\ff_q$-form of $\xi$, represented by $[\alpha_i]$ in
$\twist(\ff_q(G_i,\lambda_i))$.  Since the centralizers of $\alpha_1$
and $\gamma\alpha_1$ coincide, the quantity
\[
\# Z_{A_{(G_i,\lambda_i)}}(\alpha_i)
\]
is independent of the choice of split model for $\xi$.

\subsection{Lifts}

Suppose $1 \le n \le m \le \infty$ and
 $\til \xi \in \Xi_{g,m}(\ff)$; let $\xi = \til\xi[p^n]\in
\Xi_{g,n}(\ff)$ be its $n$-truncation.  We define a subgroup $A(\til
\xi, \xi)\subseteq A(\xi)$ as follows.

Choose a split representative $(\til G, \til\lambda)/\ff_q$ for $\til
\xi$, and let $(G,\lambda) = (\til G[p^n], \til\lambda[p^n])$.  The
natural morphism of functors induces a map of group schemes
\begin{diagram}
\aut_{\til G,\til\lambda} & \rto^{\rho} & \aut_{G,\lambda}
\end{diagram}
and in particular a map of \'etale quotients
\begin{diagram}
\pi_0(\aut_{\til G,\til\lambda}) & \rto^{\pi_0(\rho)}&
\pi_0(\aut_{G,\lambda}).
\end{diagram}
Let
\[
A(\til \xi, \xi) = \im(\pi_0(\rho)).
\]
As an abstract group, it is independent of the choice of model $(\til
G, \til\lambda)$, and we view it as the group of discrete automorphisms of $\xi$ which lift to $\til \xi$.

\section{Stratifications on \texorpdfstring{$\stk A_g$}{Ag}}
\label{secstratifications}

Let $N\ge 3$ be an integer relatively prime to $p$.   Let $\stk A_{g,d,N}$ be the moduli space, over $\ff_p$, of $g$-dimensional abelian varieties equipped with a degree $d^2$ polarization and symplectic level-$N$ structure, and let
\[
\stk A_g = \stk A_{g,1,N}.
\]
Let $\stk X \ra \stk A_g$
be the universal principally polarized abelian scheme (with level $N$
structure).

\begin{lemma}
\label{lemagxistrat}
Fix $\xi \in \Xi_{n,g}(\ff)$ with $1 \le n \le \infty$.  There exists
a reduced subscheme $\stk A_{g,\xi} \subset \stk A_g$ such that, for each algebraically
closed field $k$ of characteristic $p$,
\begin{equation}
\label{eqdefagxi}
\stk A_{g,\xi}(k) = \st{(X,\lambda) \in \stk A_g :
  (X[p^n],\lambda[p^n]) \iso \xi_{k}}.
\end{equation}
Moreover, $\stk A_{g,\xi}$ is affine, smooth and equidimensional.
\end{lemma}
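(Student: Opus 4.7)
I would split the proof into two cases, $n=\infty$ and $n$ finite, reducing each to results of Oort and of Chai and Oort.

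When $n=\infty$, $\stk A_{g,\xi}$ is by definition the central leaf of $\xi$, and the assertions that it is a locally closed, reduced, smooth, equidimensional, and affine subscheme of $\stk A_g$ are contained in Oort's foliation theorem and its refinement by Chai-Oort; I would simply cite these results.

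For finite $n$, the plan has three steps. First, to construct $\stk A_{g,\xi}$ as a locally closed reduced subscheme, use Proposition \ref{propminfield} to fix a split model $(G_0,\lambda_0)/\ff_{q(\xi)}$ for $\xi$; then a standard Isom-scheme argument, applied to $\Isom_{\stk A_g \times \ff_{q(\xi)}}((\stk X[p^n], \lambda[p^n]),(G_0,\lambda_0))$ (which is finite and affine over $\stk A_g\times\ff_{q(\xi)}$), together with Galois descent for $\ff_{q(\xi)}/\ff_p$, produces $\stk A_{g,\xi}$ as the desired subscheme of $\stk A_g$; local closedness of the stratification by $p^n$-torsion type is the key input here. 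Second, for smoothness and equidimensionality, combine the Serre-Tate theorem with Grothendieck-Messing deformation theory: the formal completion of $\stk A_g$ at $(X,\lambda)$ is the universal deformation space of $(X[p^\infty],\lambda[p^\infty])$, and the formal completion of $\stk A_{g,\xi}$ there is the formal subscheme parameterizing deformations whose $p^n$-torsion is trivial as a deformation of $(X[p^n],\lambda[p^n])$. A Dieudonn\'e-module calculation exhibits this subscheme as formally smooth of a dimension depending only on $\xi$, which yields both smoothness and equidimensionality at every point.

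The main obstacle is affineness for finite $n$. Set-theoretically, $\stk A_{g,\xi}$ is a union of the central leaves $\stk A_{g,\til\xi}$ over $\til\xi\in \Xi_{g,\infty}(\ff)_\xi$, each affine by the $n=\infty$ case, but this union is in general infinite, so affineness does not follow from the piecewise statement. My plan is instead to realize $\stk A_{g,\xi}$ as a closed subscheme of the Ekedahl-Oort stratum $\stk A_{g,\xi[p]}$ (which is affine by Oort), the point being that within a fixed EO stratum the isomorphism class of the $p^n$-torsion should not degenerate further under specialization. Verifying this rigidity of the $p^n$-torsion type within a fixed $p$-torsion type is where I expect the main technical work to lie.
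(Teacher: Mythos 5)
For $n=\infty$ your route coincides with the paper's (a citation to Oort's foliation paper).  For finite $n$, however, each main step of your plan leaves the actual content unproved.  The Isom scheme $\Isom((\stk X[p^n],\lambda[p^n]),(G_0,\lambda_0))$ is affine and of finite type over the base, but it is \emph{not} finite: for non-ordinary $\xi$ the automorphism group scheme of a \btn has a positive-dimensional unipotent identity component (compare the computation of $\aut(H_{1,2}[p])$ in Section~\ref{subsecexminimal}), so Chevalley only gives that the locus \eqref{eqdefagxi} is constructible; the local closedness you defer to as ``the key input'' is precisely the theorem the paper cites (Vasiu's level-$m$ stratification, and Oort's result for $n=\infty$), and your construction does not supply it.  (Descent to $\ff_p$ is also not available in general: the $\gal(\ff_p)$-conjugates of this locus are the strata attached to the conjugate classes of $\xi$, so $\stk A_{g,\xi}$ is only defined over a field of definition of $\xi$.)  For smoothness, your identification of the formal completion of the \emph{reduced} stratum with the locus of deformations with constant $p^n$-torsion, and the assertion that ``a Dieudonn\'e-module calculation'' shows this locus is formally smooth of a dimension depending only on $\xi$, are exactly the nontrivial points; moreover Grothendieck--Messing theory governs $p$-divisible groups, not truncated ones --- the relevant tool is Illusie's deformation theory of truncated Barsotti--Tate groups, packaged in Wedhorn's theorem that $\defo(X[p^\infty],\lambda)\ra \defo(X[p^n],\lambda[p^n])$ is formally smooth.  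The paper avoids any such computation: it uses that formal smoothness together with Serre--Tate to transport complete local rings between points of the stratum, deduces smoothness from equisingularity of a reduced scheme, and cites Vasiu and Wedhorn for purity of the codimension ($=\dim\aut(\xi)$), which is what gives equidimensionality.

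The affineness step is a genuine gap: the rigidity you hope for fails.  Take $n$ at least the isogeny cutoff $b(g)$ of Lau--Nicole--Vasiu (quoted in Section~\ref{secstratifications}), so that $\stk A_{g,\xi}$ lies in a single Newton stratum, and note that Ekedahl--Oort strata need not: for $g=3$ the EO stratum $S$ of $p$-rank $0$ and $a$-number $1$ (elementary sequence $(0,1,2)$) is irreducible of dimension $3$, its generic point has Newton polygon with slopes $1/3$ and $2/3$ (it cannot lie in the $2$-dimensional supersingular locus), and $S$ also contains supersingular points, since supersingular threefolds with $a$-number $1$ exist and their EO type is forced to be $(0,1,2)$.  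Now let $\xi$ be the $p^n$-torsion of the geometric generic point of $S$.  Then $\stk A_{g,\xi}$ contains the generic point of $S$, hence is dense in $S$, but it misses the supersingular points of $S$ because it sits inside the non-supersingular Newton stratum; so it is not closed in $S=\stk A_{g,\xi[p]}$, and the $p^n$-torsion type does degenerate under specialization within a fixed EO stratum.  In addition, the Oort result you invoke gives \emph{quasi-affineness} of EO strata, not affineness, so even granting closedness the argument would fall short of the claim.  The paper does not argue affineness this way at all; it is taken from the cited results on central leaves and level-$m$ strata.
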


\begin{proof}
In the case $n=\infty$, the existence of a scheme $\stk A_{g,\xi}$
satisfying \eqref{eqdefagxi} is \cite[Thm.\ 3.3]{oortfoliations};
there, such a scheme is called a ``central leaf.''
For
finite $n$ see, e.g., \cite[Thm.\ 1.2]{vasiulevelm}.   It has been understood for some time that
$\stk A_{g,\xi}$ is pure of codimension $\dim \aut(\xi)$ in $\stk A_g$
\cite{vasiulevelm,wedhorndimoort}.  For smoothness, we model our proof on that of
\cite[Thm.\ 3.13]{oortfoliations}, which proves the smoothness in the
case $n=\infty$.  (See also \cite[Sec.\ 4]{vasiulevelm}.)  Suppose $k$ is algebraically closed and $s,t \in
\stk A_{g,\xi}(k)$.  Choose an isomorphism $(\stk X,\lambda)_s[p^n]
\sriso (\stk X,\lambda)_t[p^n]$; it induces an isomorphism of
deformation spaces $\defo((\stk X,\lambda)_s[p^n]) \sriso
\defo((\stk X,\lambda)_t[p^n])$.  By \cite[Thm.\ 2.8]{wedhorndimoort},
$\defo((\stk X,\lambda)_s[p^\infty]) \ra
\defo((\stk X,\lambda)_s[p^n])$ is formally smooth; the Serre-Tate theorem shows that the
formal completion $\stk A_{g}^{/s}$ is isomorphic to
$\defo((\stk X,\lambda)_s[p^\infty])$.  Of course, the same statements
hold at $t$, {\em mutatis mutandis}, and taken together this shows
that $\stk A_{g,\xi}^{/s} \subset \stk A_{g}^{/t}$ and $\stk
A_{g,\xi}^{/t} \subset \stk A_g^{/t}$ are isomorphic.  Since we have
given $\stk A_{g,\xi}$ a structure of reduced subscheme, all of its
points are equisingular, thus smooth.
\end{proof}

The Newton polygon of a \btinfinity is any device which records its
geometric isogeny class; the Newton polygon of an abelian variety $X$
is that of $X[p^\infty]$.  For an admissible, symmetric Newton polygon $\nu$ of
height $2g$, let $\stk A_g^\nu$ be the reduced, locally closed
subscheme parametrizing those abelian varieties
whose Newton polygon is $\nu$.

Oort has shown that $\stk A_g^\nu$ is, up to a finite morphism, a
product, as follows.

\begin{theorem}[Oort]
\label{thoortfoliation}
Let $\nu$ be an admissible symmetric Newton polygon of height $2g$.
There exist schemes $S^\nu$ and $T^\nu$ of finite type over $\ff_p$, a
geometrically isotrivial $p$-divisible group $\calg^\nu \ra S^\nu
\cross T^\nu$ with Newton polygon $\nu$, and a finite surjective
morphism
\begin{diagram}[LaTeXeqno]
\label{diagoort}
S^\nu \cross T^\nu & \rto^{\Phi^\nu} & \stk A_g ^\nu
\end{diagram}
such that
\begin{alphabetize}
\item There is an isogeny $\calg \ra \Phi^{\nu *}\stk X[p^\infty]$.
\item For each $s \in S^\nu(\ff)$ there exists some $\xi \in
  \Xi_{g,\infty}(\ff)$ such that
\[
\Phi^\nu(s\cross T^\nu) = \stk A_{g,\xi}.
\]
\item Each $\stk A_{g,\xi}$ arises in this way.
\end{alphabetize}
\end{theorem}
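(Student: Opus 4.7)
\emph{Plan.} My plan is to take $T^\nu$ to be a finite \'etale cover of a single central leaf, $\calg^\nu$ the constant $p$-divisible group on it, and $S^\nu$ a moduli scheme parameterizing finite flat subgroup schemes of some $p^N$-torsion whose quotients produce the other central leaves with Newton polygon $\nu$. The real content is a bounded-degree statement that a single $N$ works uniformly; this is the essence of Oort's theorem on isogeny leaves.

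\emph{Constructing $T^\nu$, $S^\nu$, and $\calg^\nu$.} Fix $\xi_0 \in \Xi_{g,\infty}(\ff)$ with Newton polygon $\nu$, together with a split pqp model $(G_0, \lambda_0)$ supplied by Proposition \ref{propminfield}. Let $T^\nu$ be a finite \'etale cover of $\stk A_{g,\xi_0}$ on which the Isom-sheaf $\Isom(\stk X[p^\infty]|_{\stk A_{g,\xi_0}}, (G_0, \lambda_0) \times \stk A_{g,\xi_0})$ is trivialized; by Lemma \ref{lemagxistrat} this $T^\nu$ is of finite type, smooth, affine, and equidimensional, and the universal pqp $p$-divisible group on $T^\nu$ is canonically identified with the constant group $G_0 \times T^\nu$. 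Choose $N$ uniformly so that every $\xi \in \Xi_{g,\infty}(\ff)$ of Newton polygon $\nu$ arises as $G_0/K$ for some $\lambda_0$-self-dual finite flat $K \subset G_0[p^N]$. Let $S^\nu$ be the moduli scheme of such subgroup schemes of $G_0[p^N]$ whose quotient is pqp with Newton polygon $\nu$: it sits inside a Grassmannian of the finite group scheme $G_0[p^N]$, cut out by closed conditions, and is therefore of finite type. Set $\calg^\nu = G_0 \times (S^\nu \times T^\nu)$, tautologically geometrically isotrivial with Newton polygon $\nu$.

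\emph{The map $\Phi^\nu$ and its properties.} Let $\stk K \subset G_0 \times S^\nu$ be the universal subgroup. On geometric points, set $\Phi^\nu(s, t) = (X_t/K_s, \lambda_t/\lambda_{K_s})$, viewing $K_s$ inside $X_t[p^\infty]$ via the trivialization on $T^\nu$; Serre--Tate rigidifies this to a morphism $\Phi^\nu : S^\nu \times T^\nu \to \stk A_g^\nu$. Property (a) is the universal quotient isogeny $\calg^\nu \to \calg^\nu/\stk K \iso \Phi^{\nu *}\stk X[p^\infty]$. Property (b) holds because, for fixed $s$, every fiber $X_t/K_s$ has $p$-divisible group $G_0/K_s$ of a fixed isomorphism class $\xi_s$, giving $\Phi^\nu(s \times T^\nu) \subseteq \stk A_{g, \xi_s}$; equality follows since every abelian variety with $p$-divisible group in $\xi_s$ admits a trivialized lift of $G_0/K_s$ to $G_0$, hence is hit. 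Property (c) is immediate from the uniform choice of $N$, surjectivity of $\Phi^\nu$ follows at once, and fibers over $[X,\lambda] \in \stk A_g^\nu$ are torsors under the finite \'etale part of the stabilizer of $(G_0, \lambda_0, K_s)$, so $\Phi^\nu$ is finite.

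\emph{Main obstacle.} The only serious difficulty is the uniform bound on $N$: it asserts that the infinitely many central leaves of a fixed Newton polygon $\nu$ all lie within $p^N$-isogeny of a single base for some $N$ depending only on $g$ and $\nu$. Without it, $S^\nu$ would be an increasing union of Grassmannians rather than of finite type, and the theorem would fail as stated. Every other step above is a formal consequence of Lemma \ref{lemagxistrat}, Serre--Tate rigidity, and standard Grassmannian moduli; the depth of the theorem lies entirely in Oort's isogeny-leaf finiteness, which is not formal and must be invoked as the essential external input.
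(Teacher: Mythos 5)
Your proposal attempts to reprove Oort's foliation theorem from first principles, whereas the paper's proof of Theorem \ref{thoortfoliation} is essentially a citation: it invokes \cite[Thm.\ 5.3]{oortfoliations} and then obtains the stated $\ff_p$-version by taking the disjoint union of $\gal(\ff_p)$-conjugates of the schemes Oort constructs over $\ff$ (a descent step you never address: your construction lives over $\ff_{q(\xi_0)}$, while the theorem requires schemes of finite type over $\ff_p$). Reproving the theorem would be fine if the steps you label ``formal'' were formal, but several are not, and one is wrong as stated.

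First, your $T^\nu$ does not exist as described: there is in general no finite \emph{\'etale} cover of $\stk A_{g,\xi_0}$ trivializing $\Isom(\stk X[p^\infty]\rest{\stk A_{g,\xi_0}},(G_0,\lambda_0)\cross \stk A_{g,\xi_0})$. The automorphism group scheme of a non-ordinary \btn is not \'etale --- it has a positive-dimensional unipotent identity component (see Section \ref{subsecexminimal}) --- so the Isom-sheaf is not finite \'etale over the leaf and sections need not exist \'etale-locally. What is true is Oort's trivialization theorem (Proposition \ref{proptrivial}, i.e.\ \cite[Thm.\ 1.3]{oortfoliations}): a finite \emph{faithfully flat} cover, built from an \'etale morphism followed by a purely inseparable (Frobenius) one, and trivializing each $p^N$-torsion rather than the full \btinfinity in one stroke; your claim that the universal group on $T^\nu$ becomes constant is too strong. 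Second, the steps you call consequences --- that $\Phi^\nu(s\cross T^\nu)$ equals the whole leaf $\stk A_{g,\xi_s}$, and that $\Phi^\nu$ is finite --- are exactly where the substance lies. For surjectivity you need, for each $(Y,\mu)$ on the leaf, a point $t\in T^\nu$ whose trivialization carries some maximal isotropic $K\subset X_t[p^N]$ with $X_t/K\iso Y$ (as polarized varieties with level structure) onto the \emph{specific} subgroup $K_s$; subgroups with isomorphic quotients need not lie in one orbit of the trivializing data, and your $T^\nu$ is merely a cover admitting a section, not the full torsor. Finiteness of $\Phi^\nu$ requires properness, not just finiteness of fibers, and the polarization bookkeeping is glossed over: a principal polarization never descends along a nontrivial quotient, which is precisely why Oort works with an auxiliary completely slope divisible family $\stk Y$ carrying a degree-$p^i$ polarization. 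Finally, the uniform bound on $N$ --- the one point you do flag --- is taken as a black box; combined with the above, the proposal in effect assumes the content of \cite[Thm.\ 5.3]{oortfoliations} while adding an incorrect \'etale-trivialization step, so it does not constitute a proof.
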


\begin{proof}
This is \cite[Thm.\ 5.3]{oortfoliations}.   In fact, what Oort proves is
that, over $\ff$, there exists an arrangement as in  \eqref{diagoort}
where the domain is a product of {\em integral} schemes over $\ff$.
The present formulation follows by taking the disjoint union of
$\gal(\ff_p)$-conjugates of the schemes constructed in {\em loc. cit.}
\end{proof}

Consequently, at least if we set aside the supersingular locus,
central leaves have bounded topology.  (In general,
say that $\xi \in \Xi_{g,n}(\ff)$ is not supersingular if no component
of $\stk A_{g,\xi}$ is fully contained in the supersingular locus of
$\stk A_g$.)  For a variety $V$ over a field $K$, let $\sigma_c(V)$ be the sum of its compact Betti numbers; 
\[
\sigma_c(V) = \sum_i \dim H^i_c(V_{\bar K},\bar\rat_\ell).
\]

\begin{corollary}
\label{cortrivtop}
For each natural number $N$ there exists $C = C(g,N,p)$ such that, for
each non-supersingular $\xi \in \Xi_{g,\infty}(\ff)$, there exists a scheme $V_{\xi,N}$
and a finite and faithfully flat  morphism
\begin{diagram}
V_{\xi,N} & \rto^{\phi_{\xi,N}} & \stk A_{g,\xi}
\end{diagram}
such that $\phi^*_{\xi,N}\stk X[p^N]$ is constant and $\sigma_c(V_{\xi,N}) < C$.
\end{corollary}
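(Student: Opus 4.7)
The plan is to invoke Oort's foliation theorem (Theorem \ref{thoortfoliation}) to present every central leaf with a fixed Newton polygon as a finite image of a fiber of a single family over the fixed base $T^\nu$, and then trivialize the $p^N$-torsion on a further bounded-degree cover. Since there are only finitely many admissible symmetric Newton polygons $\nu$ of height $2g$, it suffices to produce a constant $C_\nu$ for each non-supersingular $\nu$ and then take the maximum.

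Fix such a $\nu$ and apply Theorem \ref{thoortfoliation}. Each $\stk A_{g,\xi}$ with Newton polygon $\nu$ equals $\Phi^\nu(\{s\} \times T^\nu)$ for some $s \in S^\nu(\ff)$, so the restriction $\pi_s : T^\nu_s := \{s\} \times T^\nu \to \stk A_{g,\xi}$ is finite surjective, and $T^\nu_s \cong T^\nu$ is a fixed finite-type scheme depending only on $\nu$. Let $M$ be such that $p^M$ annihilates $\ker(\iota)$, where $\iota: \calg^\nu \to \Phi^{\nu*}\stk X[p^\infty]$ is the isogeny provided by part (a) of Theorem \ref{thoortfoliation}; then $M$ depends only on $\nu$. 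Since $\calg^\nu$ is geometrically isotrivial, the finite flat group scheme $\calg^\nu[p^{N+M}]|_{T^\nu_s}$ is isotrivial and thus becomes constant after a finite étale cover $V_{\xi,N} \to T^\nu_s$, constructed as the appropriate $\mathbf{Isom}$-torsor against a fixed constant model. Its degree is at most the order of the (étale) automorphism group of the fiber; because there are only finitely many isomorphism classes of finite commutative flat group schemes of fixed order over $\ff$, this degree is uniformly bounded in $g, N, p$. After one additional finite étale cover of similarly bounded degree (to trivialize $\ker(\iota)$ via the isogeny), $\pi_s^*\stk X[p^N]$ becomes constant on $V_{\xi,N}$.

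Finally, to bound $\sigma_c(V_{\xi,N})$ uniformly, I would use that $V_{\xi,N} \to T^\nu$ is finite étale of degree bounded only in terms of $g, N, p$, while $T^\nu$ is a fixed finite-type scheme. By the Leray spectral sequence, $\sigma_c(V_{\xi,N}) = \sum_i \dim H^i_c(T^\nu, f_*\bar\rat_\ell)$ with $f_*\bar\rat_\ell$ locally constant of bounded rank, so standard $\ell$-adic bounds on total Betti numbers with coefficients in a local system of bounded rank on a fixed variety yield a uniform $C_\nu$. The main obstacle is the uniformity of the degree of the trivializing cover as $\xi$ varies: this rests on the finiteness of isomorphism classes of fixed-order finite flat commutative group schemes over $\ff$, which pins down the relevant automorphism groups in terms of $g, N, p$ alone, independently of which $\xi$ with Newton polygon $\nu$ we are looking at.
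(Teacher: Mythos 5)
Your proposal has two genuine gaps, and they are both of the ``characteristic-$p$ is subtle'' variety.

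First, you assert that because $\calg^\nu$ is geometrically isotrivial, the restriction $\calg^\nu[p^{N+M}]\rest{T^\nu_s}$ becomes constant after a \emph{finite \'etale} cover, realized as an $\mathbf{Isom}$-torsor under a finite \'etale automorphism group. This is false in characteristic $p$: a geometrically fiberwise constant finite flat group scheme (or $p$-divisible group) is generally \emph{not} trivialized by an \'etale cover. The automorphism group scheme of a $\btn$ is affine of positive dimension (for example $\aut_{\aalpha_p}\iso\gp_m$, and the examples in Section~\ref{subsecexminimal} have two-dimensional unipotent identity components), so the $\mathbf{Isom}$-scheme you write down is not finite, let alone \'etale. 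This is precisely why Oort's trivialization result, stated here as Proposition~\ref{proptrivial}, produces a finite \emph{faithfully flat} cover that explicitly factors through a purely inseparable (Frobenius-iterate) morphism. Your argument replaces that with an \'etale torsor and therefore does not construct the cover it claims to.

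Second, your uniformity argument rests on ``the finiteness of isomorphism classes of fixed-order finite flat commutative group schemes over $\ff$,'' which is also false over $\bar\ff_p$; the introduction to this paper points out that already $X[p^2]$ takes infinitely many geometric isomorphism types. So even setting aside the \'etale-versus-flat issue, this step does not deliver a bound on the degree depending only on $g,N,p$.

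The paper's proof sidesteps both problems by not re-trivializing for each $\xi$. Following Oort and Chai--Oort, over each non-supersingular Newton stratum $\stk A_g^\nu$ one has an abelian scheme $\stk Y \to W=\stk A_g^\nu$ with completely slope divisible $p$-divisible group and an isogeny $\stk Y \to \stk X\rest W$ of degree $p^i$; one then applies Proposition~\ref{proptrivial} \emph{once} to produce a regular scheme $T^{\nu,N+i}$, finite and faithfully flat over $T^\nu$, trivializing $\stk Y[p^{N+i}]\rest{T^\nu}$. Because every central leaf with Newton polygon $\nu$ receives the \emph{same} $T^{\nu,N+i}$ (via $\stk X\rest{\stk A_{g,\xi}} \iso \stk Y\rest{T^\nu}/(\calg_s \cross T^\nu)$), the bound $C=\max_\nu \sigma_c(T^{\nu,N+i})$ over the finitely many non-supersingular $\nu$ is automatic; no uniform bound on degrees of trivializing covers as $\xi$ varies is needed. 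Your reduction to finitely many Newton polygons and the idea of running the trivialization to level $N+M$ (to absorb the isogeny kernel) are in the right spirit, but the mechanism by which the cover is produced and the source of uniformity both need to be replaced.
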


\begin{proof}
All necessary information is contained in Oort's proof of Theorem
\ref{thoortfoliation}.  Subsequent improvements by Chai and Oort
\cite{chaioort11}  allow a somewhat clearer picture of the structure of
\eqref{diagoort}, and we avail ourselves of these insights to give a
streamlined, if slightly anachronistic, description of Theorem
\ref{thoortfoliation}.


Fix a non-supersingular Newton polygon $\nu$.  The uniformization
\eqref{diagoort} is achieved as follows.  Let $W = \stk A_g^\nu$; it is
smooth and irreducible.  There is an abelian scheme $\stk Y \ra
W$, and an isogeny $\stk Y \ra \stk X\rest W$ (say of degree $p^i$), such
that $\stk Y$ admits a polarization of degree $p^i$ and $\stk Y[p^\infty]$
is completely slope divisible.  In particular, the existence of $\stk Y
\ra W$ means there is a map $W \ra \stk A_{g,p^i}$, whose image turns
out to be a central leaf $C\subset \stk A_{g,p^i}^\nu$.  Fix a central
leaf $D\subset W$; there is a finite, surjective morphism $D \ra C$.
In \eqref{diagoort}, for $T^\nu$ one takes a finite, faithfully flat  $T \ra C$ which
trivializes $\stk Y[p^i]$ (Proposition \ref{proptrivial}) and then sets $T^\nu = T \cross_C D$, which is
finite and faithfully flat over $D$.  For $S^\nu$, one takes any isogeny leaf in $\stk
A_g^\nu$.

Let $Y$ be any fiber of $\stk Y \ra W$.  On one hand, $\stk Y[p^i]_{T^\nu} =
Y[p^i]\cross T^\nu$.  On the other hand, there is a finite group
scheme $\calg/S^\nu$ and an inclusion
$\calg \inject Y[p^i] \cross S^\nu$ such that $(Y\cross S^\nu)/\calg
\iso \stk X \rest{S^\nu}$ (as polarized abelian schemes).  Ultimately,
one has
\begin{diagram}
\calg_{S^\nu\cross T^\nu} & \rinject & Y[p^i] \cross S^\nu \cross
T^\nu &\iso \stk Y[p^i]_{S^\nu\cross T^\nu}.
\end{diagram}
Suppose $s \in S^\nu(\ff)$; let $\xi = \xi(s)$ be the corresponding
isomorphism class of \btn.  The universal abelian scheme over the
central leaf $\stk A_{g,\xi}$ in $W$ is given by
\[
\stk X\rest{\stk A_{g,\xi}} \iso \stk Y\rest{T^\nu}/(\calg_s \cross T^\nu).
\]
In particular, as in Proposition \ref{proptrivial}, let $T^{\nu, N+i} \ra T^\nu$ be a finite and faithfully flat morphism 
(with $T^{\nu,N+i}$ regular) which trivializes 
trivializes $\stk Y[p^{N+i}] \rest{T^\nu}$.  Then for each central
stream $\stk A_{g,\xi} \subset \cala_g^\nu$, the finite and surjective morphism
$\phi_{\xi,N}: T^{\nu,N+i}
\ra \stk A_{g,\xi}$ trivializes $\stk X[p^N]\rest{\stk A_{g,\xi}}$.  Since the source and target of $\phi_{\xi,N}$ are regular, the morphism is faithfully flat.

The corollary now follows, where for $C$ we may take
\[
C = \max\st{ \sigma_c(T^{\nu,N+i}) : \nu\text{ not
    supersingular}}.
\]
\end{proof}

In the proof of Corollary \ref{cortrivtop}, we needed the fact that a geometrically constant \btn over a regular base is fppf-trivial, and is in fact trivialized by pullback to a regular scheme:  

\begin{proposition}[Oort]
\label{proptrivial}
Let $S/K$ be a regular Noetherian scheme over a perfect field, and let $\calg \ra S$ be a geometrically fiberwise constant $p$-divisible group.   For each natural number $N$, there exists a Noetherian regular scheme $T_N/K$ and a finite and faithfully flat morphism $T_N \ra S$ such that $\calg \cross_S T_N$ is constant.
\end{proposition}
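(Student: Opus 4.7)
The plan is to build $T_N$ as the total space of a torsor trivializing $\calg[p^N]$, split into an étale stage (automatic) and an infinitesimal stage (the delicate one). Since $K$ is perfect and the geometric fibers of $\calg[p^N]$ are all isomorphic, one can first pass to a finite separable extension $K'/K$ over which the common geometric fiber $G_0$ is defined; this preserves regularity of $S$. Consider the isomorphism scheme
\[
\mathcal{I}_N \;:=\; \underline{\Isom}_S\bigl(G_0\times_{K'}S,\, \calg[p^N]\bigr),
\]
affine of finite type over $S$. Geometric constancy makes $\mathcal{I}_N\to S$ a torsor under $\aut(G_0)\times_{K'}S$, which is finite flat and faithfully flat over $S$ since the automorphism scheme of a truncated Barsotti--Tate group is finite over its base field; the pullback of $\calg[p^N]$ to $\mathcal{I}_N$ is tautologically trivialized. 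What remains is to replace $\mathcal{I}_N$ by a \emph{regular} cover enjoying the same trivialization property.

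Next, factor the torsor through the étale-connected sequence
\[
0 \to \aut^0(G_0) \to \aut(G_0) \to \pi_0(\aut(G_0)) \to 0.
\]
The $\pi_0$-quotient yields a finite étale torsor $S^{\et}\to S$, which is automatically regular since étale morphisms preserve regularity. Over $S^{\et}$ one must then trivialize the residual torsor under the infinitesimal group scheme $\aut^0(G_0)$ while preserving regularity.

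For the infinitesimal layer, appeal to the structure theory of $p$-divisible groups with constant Newton polygon: after a further finite étale cover of $S^{\et}$, the ambient $\calg$ acquires its complete slope filtration as a filtration by Barsotti--Tate subgroups, so that the infinitesimal automorphisms of $\calg[p^N]$ act through the endomorphism algebras of the graded isoclinic constituents. On each such constituent the residual torsor is of Kummer type, locally presented as $\spec(\calo_S[y]/(y^{p^m}-u))$ for some unit $u\in\calo_S^\times$; since the base is regular, such an extraction of $p^m$-th roots of units is itself regular by direct local inspection. Iterating up the slope filtration assembles the desired $T_N\to S$: finite, faithfully flat, and regular.

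The main obstacle is the regularity requirement at the infinitesimal stage. A generic torsor under an infinitesimal group scheme over a regular base is finite flat but typically non-reduced, hence non-regular, so one cannot simply take $T_N=\mathcal{I}_N$. Overcoming this hinges on Oort's slope filtration combined with the hypothesis that $\calg$ is geometrically fiberwise \emph{constant}---not merely of constant Newton polygon. This rigidity forces the local defining equations of the infinitesimal layer to involve units rather than sections with nontrivial vanishing, which is precisely what is needed for the cover to remain regular.
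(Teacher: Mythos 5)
Your construction breaks down at two points, and both are essential. First, the torsor step: you assert that $\underline{\Isom}_S(G_0\times S,\,\calg[p^N])$ is a torsor under $\aut(G_0)\times S$, finite and faithfully flat over $S$, ``since the automorphism scheme of a truncated Barsotti--Tate group is finite over its base field.'' That last claim is false: the automorphism group scheme of a \btn over a field is affine of finite type but in general has positive-dimensional connected component --- indeed Section \ref{subsecexminimal} of this very paper computes that $\aut(H_{1,2}[p])$ has a two-dimensional unipotent identity component. So the Isom scheme is not finite over $S$. Worse, even its flatness over $S$ (equivalently, the torsor property, i.e.\ fppf-local triviality of $\calg[p^N]$) is not a formal consequence of geometric fiberwise constancy; establishing some form of local triviality from fiberwise constancy is precisely the nontrivial content of Oort's theorem that the proposition rests on, so at this point the argument is circular.

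Second, the infinitesimal stage. The residual connected group here is unipotent, not multiplicative, so the torsors are not ``of Kummer type''; and even granting a local presentation $\spec(\calo_S[y]/(y^{p^m}-u))$ with $u$ a unit, regularity does \emph{not} follow by local inspection. For example, over $S=\spec k[[x,z]]$ take $u=1+xz$: substituting $w=y-1$ gives $k[[x,z]][w]/(w^p-xz)$, which is singular at the origin. So the claimed rigidity (``units force regularity'') is simply not there, and this is exactly the delicate point. The paper's route avoids both problems: it quotes Oort's Theorem 1.3, which already produces a finite \emph{surjective} trivializing cover $T_N\ra S$ factoring as an \'etale morphism followed by a purely inseparable one over the normalization of $S$; regularity of $S$ forces the normalization to be $S$ itself, the purely inseparable piece is identified with an iterate of relative Frobenius, and perfectness of $K$ then makes its source abstractly isomorphic to $S$, hence regular, with finiteness and flatness coming from Kunz-type facts about Frobenius on a regular scheme plus flatness of the \'etale piece. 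If you want a proof along your lines, you would have to replace the Isom-torsor and Kummer steps by an argument of that kind; as written, the proposal does not establish either the flatness or the regularity of the cover.
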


\begin{proof}
This is \cite[Thm.\ 1.3]{oortfoliations}.  In fact, Oort shows the existence of a finite, surjective morphism $T_N \ra S$ which trivializes $\calg[p^N]$, under the weaker hypothesis that $S$ be Noetherian and integral with Noetherian normalization.  Moreover, $T_N \ra S$ factors as $T_N \ra T_N' \ra S_N \ra S$, where $T_N \ra T_N'$ is \'etale; $T_N' \ra S_N$ is purely inseparable; and the normalization morphism $\til S \ra S$ factors as $\til S \ra S_N \ra S$.  To prove the formulation in Proposition \ref{proptrivial}, we may assume that $S$ is integral.  The normality of $S$  implies that $S_N \iso S$.  An analysis of \cite[Lemma 1.4]{oortfoliations} shows that one may realize  $T_N' \ra S_N$ as a suitable iterate of the Frobenius morphism $F^{(i)}: T_N' \ra T_N'^{(p^i)}$. Then $T_N \ra S$, as a composition of finite and faithfully flat morphisms, is finite and faithfully flat itself.  Finally, suppose $Q \in T_N'$.  Since $K$ is perfect, the local rings $\calo_{T_N',Q}$ and $\calo_{T_N^{(p^i)}, F^{(i)}(Q)}$ are (abstractly) isomorphic.  Therefore $T_N$, as an \'etale cover of a regular scheme, is regular.  
\end{proof}

Proposition \ref{proptrivial} shows that a geometrically fiberwise constant \btinfinity over a regular base is locally constant in the sense of \cite{garuti08}.  Let $s \in S$ be a point, with residue field $\kappa(s)$.  By \cite[Prop.\ 3]{garuti08}, in the setting of Proposition \ref{proptrivial}, for $T_n$ one may take a certain torsor under a finite group scheme over $\kappa(s)$.

Say that $\xi \in \Xi_{g,n}(\ff)$ admits a generic Newton polygon if
there is some Newton stratum $\stk A_g^{\nu(\xi)}$ such that $\stk A_g^{\nu(\xi)}
\cap \stk A_{g,\xi}$ is dense in $\stk A_{g,\xi}$.  For example, if
$\xi \in \Xi_{g,1}(\ff)$ is not supersingular, then $\stk A_{g,\xi}$ is irreducible
\cite[Thm.\ 11.5]{evdg} (see also \cite{harashitagenericnp}), and
thus $\xi$ admits a generic Newton polygon.  At the opposite extreme,
if $n \gg_g 0$, then all pqp $p$-divisible groups with $p^n$-torsion
$\xi$ are isomorphic, and in particular share the same Newton
polygon.  More generally, Lau, Nicole and Vasiu give an explicit upper
bound \cite[Thm.\ 1.2]{launicolevasiu} for a function (``the isogeny cutoff'') $b(g)$ such
that, if $\xi \in \Xi_{g,n}$ and $n \ge b(g)$, then $\stk A_{g,\xi}$
is contained in a unique Newton stratum.    For such a $\xi$, denote
its generic Newton polygon by $\nu(\xi)$.

A stratum $\stk A_{g,\xi}$ with a generic Newton polygon is irreducible:

\begin{theorem}[Chai-Oort]
Suppose $1 \le n \le \infty$ and that $\xi \in \Xi_{g,n}(\ff)$ has 
a generic, non-supersingular Newton polygon.
     Then $\stk A_{g,\xi}$ is irreducible.
\end{theorem}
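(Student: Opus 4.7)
The plan is to reduce to Chai--Oort's irreducibility of non-supersingular central leaves (which is precisely the theorem in the case $n=\infty$) by leveraging Oort's foliation structure on the Newton stratum.  Because $\stk A_{g,\xi}$ is smooth and equidimensional (Lemma \ref{lemagxistrat}), irreducibility is equivalent to connectedness; and by the generic-Newton-polygon hypothesis, the open subset $U := \stk A_g^{\nu(\xi)} \cap \stk A_{g,\xi}$ is dense in $\stk A_{g,\xi}$.  Hence it suffices to show $U$ is connected.

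Setting $\nu := \nu(\xi)$, I would invoke Theorem \ref{thoortfoliation} to produce the finite surjection $\Phi^\nu : S^\nu \times T^\nu \to \stk A_g^\nu$.  Because each fiber $\{s\} \times T^\nu$ maps onto a single central leaf, on which the geometric $p^n$-torsion class is constant, the preimage $(\Phi^\nu)^{-1}(U)$ takes the form $V_0 \times T^\nu$, where $V_0 \subset S^\nu$ is the reduced locally closed subscheme parametrizing those $s$ with $\til\xi(s)[p^n]\iso \xi$.  Chai--Oort's irreducibility of non-supersingular central leaves (applied at $n=\infty$ to the individual leaves $\Phi^\nu(\{s\}\times T^\nu)$) lets me take $T^\nu$ to be geometrically irreducible, after replacing it by a suitable connected component.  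The finite surjection $V_0 \times T^\nu \to U$ then reduces connectedness of $U$ to connectedness of $V_0$.

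Establishing connectedness of $V_0$ is the main obstacle.  Geometrically, $V_0$ parametrizes $p$-divisible groups in a fixed isogeny class of Newton polygon $\nu$ whose $p^n$-torsion is geometrically isomorphic to $\xi$, and a dimension count (using the density hypothesis together with the product decomposition $\dim \stk A_g^\nu = \dim S^\nu + \dim T^\nu$) pins down $\dim V_0 = \dim \stk A_{g,\xi} - \dim T^\nu$.  My preferred route is a Dieudonn\'e/Rapoport--Zink analysis: using the uniformization of the isogeny leaf $S^\nu$ by a Rapoport--Zink space, $V_0$ becomes a locally closed subvariety cut out by a rank/kernel condition on $p^n$-torsion, and one recognizes it as a Schubert-type cell whose irreducibility is standard.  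A back-up approach is Hecke-theoretic: the $p$-power isogeny correspondences that preserve the truncation $\xi$ act on $V_0$, and a Chai--Oort-style Hecke-orbit density argument on $V_0$ (legitimate because $\nu$ is not supersingular) should force connectedness.  Either way, combining connectedness of $V_0$ with irreducibility of $T^\nu$ and the finite surjection $V_0 \times T^\nu \to U$ yields irreducibility of $U$, and hence of $\stk A_{g,\xi}$.
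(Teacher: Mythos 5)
There is a genuine gap, and it sits exactly where you place ``the main obstacle'': the connectedness of $V_0$, the locus in the isogeny leaf $S^\nu$ where the $p^n$-truncation is geometrically isomorphic to $\xi$. Neither of your two suggestions is a proof, and neither is likely to become one. Loci of this kind inside isogeny leaves (equivalently, inside Rapoport--Zink uniformizations) are \emph{not} ``Schubert-type cells whose irreducibility is standard''; isogeny leaves and RZ spaces typically have many connected components, and determining them is itself a hard theorem, not a routine rank/kernel computation. Your back-up Hecke argument also misfires: the Chai--Oort density machinery (hypersymmetric points, $\ell$-adic monodromy) concerns \emph{prime-to-$p$} Hecke orbits inside \emph{central} leaves, i.e.\ the $T^\nu$-direction of the foliation, and gives no density or connectedness statement for $p$-power correspondences acting in the $S^\nu$-direction. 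Since, via the finite map $\Phi^\nu$, the components of $U$ are governed precisely by how the components of $V_0\times T^\nu$ are glued, asserting connectedness of $V_0$ is essentially a restatement of the difficulty you set out to resolve, not a reduction of it. There is also a secondary gap: you cannot simply replace $T^\nu$ by an irreducible component, because irreducibility of each image $\Phi^\nu(\{s\}\times T^\nu)=\stk A_{g,\xi(s)}$ does not provide a single component $T_0\subseteq T^\nu$ with $\Phi^\nu(\{s\}\times T_0)$ surjecting onto the leaf for every $s\in V_0$ simultaneously, and losing surjectivity destroys the reduction.

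For comparison, the paper's proof avoids the isogeny-leaf direction entirely. After the same reduction to $\stk A_{g,\xi}^{\nu(\xi)}:=\stk A_{g,\xi}\cap\stk A_g^{\nu(\xi)}$, one observes that each irreducible component of $\stk A_{g,\xi}^{\nu(\xi)}$ contains an entire central leaf, hence the moduli point of an abelian variety isogenous to a hypersymmetric one; by Chai--Oort \cite{chaioort11} this forces the prime-to-$p$ Hecke correspondences to act transitively on the set of irreducible components, and then Chai's $\ell$-adic monodromy theorem \cite{chailadic}, together with the smoothness from Lemma \ref{lemagxistrat}, yields irreducibility. If you want to salvage your outline, you would need to import exactly this hypersymmetric-point/monodromy input, at which point the foliation product structure and the analysis of $V_0$ become unnecessary.
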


\begin{proof}
If $n = \infty$ (equivalently, if  $n\gg_g0$), then this  is
\cite[Thm.\ 4.1]{chaioort11}.  For arbitrary $n$, only a very modest adaptation of Chai and
Oort's argument is necessary.  It suffices to
show that $\stk A_{g,\xi}^{\nu(\xi)} := \stk A_{g,\xi} \cap \stk
A_g^{\nu(\xi)}$ is irreducible.  Each irreducible
component of $\stk A_{g,\xi}^{\nu(\xi)}$ contains a central leaf, and in
particular contains the moduli point of a principally polarized abelian variety
isogenous to a hypersymmetric one.  Consequently, prime-to-$p$ Hecke
correspondences act transitively on the irreducible components of
$\stk A_{g,\xi}^{\nu(\xi)}$.  Since $\stk A_{g,\xi}^{\nu(\xi)}$ is
smooth (Lemma \ref{lemagxistrat}), \cite{chailadic} lets us deduce the
irreducibility of  $\stk A_{g,\xi}^\nu$.
\end{proof}

\section{The distribution of forms of group schemes}
\label{secdistribution}

In all that follows, we will assume that $\xi \in \Xi_{g,n}(\ff)$ is
not supersingular but does admit a generic Newton polygon.

\subsection{Generic behavior}

Suppose $1 \le n < m \le \infty$ and $\xi \in \Xi_{g,n}(\ff)$.  Then
there exists at least one lift of $\xi$ to $\Xi_{g,m}(\ff)$, i.e., there is
some $\til \xi \in \Xi_{g,n}(\ff)$ such that $\til \xi[p^n] = \xi$
(\cite[Prop.\ 3.2]{wedhorndimoort}, which is deduced from the
unpolarized verison \cite[Prop.\ 1.7]{illusiebt}).  Then
\[
\stk A_{g,\xi}(\ff) = \cup_{\til \xi \in \Xi_{g,m}(\ff)_\xi} \stk
A_{g,\til\xi}(\ff).
\]
Moreover, $\cup_{\til \xi \in \in \Xi_{g,m}(\ff)_\xi} \stk A_{g,\til
  \xi}$ is  a disjoint union of countably many locally closed
subschemes of $\stk A_g$, and is Zariski dense in $\stk A_{g,\xi}$.

\begin{lemma}
\label{lemdefamxi}
Fix $1 \le n < m \le \infty$.
\begin{alphabetize}
\item There exists a subgroup $A_m(\xi) \subseteq A(\xi)$, and an open
  dense subscheme $\stk A_{g,m,\xi}^\circ \subseteq \stk A_{g,\xi}$,
  such that if $s \in \stk A_{g,m,\xi}^{\circ}(\ff)$, then the image of
\begin{diagram}[LaTeXeqno]
\label{diagamxi}
\aut_{\stk X_s[p^m],\lambda_s[p^m]} & \rto & \aut_{\stk X_s[p^n],\lambda_s[p^n]} & \rto & \pi_0(\aut_{\stk X_s[p^n],\lambda_s[p^n]})= A(\xi)
\end{diagram}
is $A_m(\xi)$.
\item There is a subset $\Xi_{g,m}^{\circ}(\ff)_\xi \subseteq \Xi_{g,m}(\ff)$ such that
\[
\stk A_{g,m,\xi}^{\circ}(\ff) = \cup_{\til \xi \in \Xi_{g,m}^\circ(\ff)_\xi} \stk A_{g,\til\xi}(\ff).
\]
\end{alphabetize}
\end{lemma}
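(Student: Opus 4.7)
The plan is to identify $\stk A_{g,m,\xi}^\circ$ with a single ``generic'' central leaf of the $p^m$-stratification inside $\stk A_{g,\xi}$. By the preceding Chai-Oort theorem, $\stk A_{g,\xi}$ is irreducible (recall that $\xi$ is non-supersingular with a generic Newton polygon), hence admits a unique generic point $\eta$. I pick a geometric point $\bar\eta$ above $\eta$; the geometric isomorphism class of $(\stk X_{\bar\eta}[p^m], \lambda_{\bar\eta}[p^m])$ is a well-defined element $\til\xi_\eta \in \Xi_{g,m}(\ff)_\xi$ lifting $\xi$. I then define $A_m(\xi) := A(\til\xi_\eta, \xi)$ via the construction of Section \ref{secpointwise}.

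The key step is to show that $\stk A_{g,\til\xi_\eta}$ is open and dense in $\stk A_{g,\xi}$. By Lemma \ref{lemagxistrat}, $\stk A_{g,\til\xi_\eta}$ is a reduced locally closed subscheme of $\stk A_g$, and it contains $\eta$ by construction. Any locally closed subset of an irreducible scheme containing the generic point is automatically open and dense: if $L = U \cap Z$ with $Z$ closed and $\eta \in L$, then $Z$ contains the generic point, so $Z = \stk A_{g,\xi}$ and $L = U$. I therefore set $\stk A_{g,m,\xi}^\circ := \stk A_{g,\til\xi_\eta}$. For part (a): at every $s \in \stk A_{g,m,\xi}^\circ(\ff)$, the pqp group scheme $(\stk X_s[p^m], \lambda_s[p^m])$ lies in the class $\til\xi_\eta$, so the image in \eqref{diagamxi} is the intrinsic subgroup $A(\til\xi_\eta, \xi) = A_m(\xi)$ defined in Section \ref{secpointwise}. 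For part (b), take $\Xi_{g,m}^\circ(\ff)_\xi := \{\til\xi_\eta\}$, and the decomposition $\stk A_{g,m,\xi}^\circ(\ff) = \stk A_{g,\til\xi_\eta}(\ff)$ is automatic.

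The only nontrivial input is the openness of $\stk A_{g,\til\xi_\eta}$, which rests entirely on the irreducibility of $\stk A_{g,\xi}$ furnished by Chai-Oort under our non-supersingular, generic-Newton-polygon hypotheses. Without irreducibility, no ``generic'' central leaf would be singled out and a single subgroup $A_m(\xi) \subseteq A(\xi)$ would not suffice to describe the behavior on a dense open subscheme.
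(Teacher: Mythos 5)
Your proof fails at the very first step. You assert that the geometric isomorphism class of $(\stk X_{\bar\eta}[p^m], \lambda_{\bar\eta}[p^m])$ is a well-defined element $\til\xi_\eta \in \Xi_{g,m}(\ff)_\xi$, but this presupposes that the \btn over $\overline{\kappa(\eta)}$ descends to $\ff$ up to geometric isomorphism. For $m > n$ this is false in general; equivalently, the generic point $\eta$ of $\stk A_{g,\xi}$ need not lie in any stratum $\stk A_{g,\til\xi}$ with $\til\xi \in \Xi_{g,m}(\ff)$. Indeed, for $m = \infty$ (the case actually used throughout Section \ref{secdistribution}), the strata $\stk A_{g,\til\xi}$ are central leaves, which by Oort's foliation theorem (Theorem \ref{thoortfoliation}) have codimension $\dim S^\nu > 0$ inside the Newton stratum for every non-ordinary $\nu$. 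No central leaf contains $\eta$, and none is open dense in $\stk A_{g,\xi}$. This is exactly why the text preceding the lemma says only that $\cup_{\til\xi \in \Xi_{g,m}(\ff)_\xi} \stk A_{g,\til\xi}$ is Zariski dense in $\stk A_{g,\xi}$, not that it exhausts it, and why the introduction stresses decomposing $\stk A_{g,\xi}$ into \emph{infinitely many} central leaves. Your conclusion $\Xi_{g,m}^\circ(\ff)_\xi = \{\til\xi_\eta\}$ would collapse the sum over $\Xi_{g,\infty}^\circ(\ff)_\xi$ in the proof of Theorem \ref{thmainacirc} to a single term, which is not what occurs.

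The paper's actual proof avoids this trap by reasoning with the relative automorphism group scheme over $\stk A_{g,\xi}$ rather than with the fiber at $\eta$. One applies generic flatness to $\aut_{\stk X[p^m],\lambda[p^m]}$ and to the kernel of $\rho : \aut_{\stk X[p^m],\lambda[p^m]} \ra \aut_{\stk X[p^n],\lambda[p^n]}$, uses constructibility of $\pi_0$ of the image group scheme to find a dense open subscheme over which it is geometrically constant with value $A_m(\xi)$, and then takes $\stk A_{g,m,\xi}^\circ$ to be the locus, across all flatness strata, where $\pi_0$ attains that value. Irreducibility of $\stk A_{g,\xi}$ enters only to guarantee that the flatness stratification has a single open dense piece; no particular lift $\til\xi$ is singled out. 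The resulting $\stk A_{g,m,\xi}^\circ$ is characterized pointwise by a condition on the isomorphism class of $(X[p^m],\lambda[p^m])$, and is therefore a union of the strata $\stk A_{g,\til\xi}$ --- typically infinitely many of them.
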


\begin{proof}
  Let $\rho:\aut_{\stk X[p^m],\lambda[p^m]} \ra
  \aut_{\stk X[p^n],\lambda[p^n]}$ be the obvious homomorphism of group
  schemes over $\stk A_{g,\xi}$, and let $K_{m,n}$ be its kernel.  Since
  $\stk A_{g,\xi}$ is Noetherian, irreducible and reduced, there is a
  finite decomposition $\stk A_{g,\xi} = \cup_{0 \le i \le r} V_i$
  of $\stk A_{g,\xi}$ as a disjoint union of locally closed subschemes
  such that $K_{m,n}\rest{V_i}$ and
  $\aut_{\stk X[p^m],\lambda[p^m]}\rest{V_i}$ are flat, and $V_0$ is
  open and dense in $\stk A_{g,\xi}$.  Note that, over each $V_i$, the
  quotient functor $H := \aut_{\stk X[p^m],\lambda[p^m]}/K_{m,n}$ is
  represented by a group scheme.  Possibly after refining the
  decomposition of $\stk A_{g,\xi}$, we assume that $H\rest{V_i}$ is
  flat, too.

Since $V_0$ is Noetherian and $\pi_0(H\rest{V_0})$ is constructible, there is an open dense subscheme $U_0 \subseteq V_0$ over which $\pi_0(H)$ has geometrically constant fibers.  Let $A_m(\xi)$ be this common group.

For each $1 \le i \le r$, let $U_i \subset V_i$ be the (possibly empty) locally closed subscheme where $\pi_0(H) \iso A_m(\xi)$.  Then $\stk A_{g,m,\xi}^{\circ} := \cup U_i$ is the sought-for open, dense subscheme.

From the definition of $\stk A_{g,m,\xi}^\circ$ it is clear that, if $s \in \stk A_{g,\xi}(\ff)$, then $s\in \stk A_{g,m,\xi}^\circ(\ff)$ if and only if \eqref{diagamxi} is surjective.  Thus, membership in  $\stk A_{g,m,\xi}^\circ$ depends only on the isomorphism class of $(X[p^m],\lambda[p^m])$, and $\stk A_{g,m,\xi}^\circ$ is a union of certain locally closed subschemes $\stk A_{g,\til\xi}$.
\end{proof}

Henceforth, let $A_*(\xi) = A_\infty(\xi)$, the group of generically
liftable discrete automorphisms of $\xi$, and let $\stk
A_{g,\xi}^\circ$ be the complement of  the supersingular locus in $\stk A_{g,\infty,\xi}^\circ$.

\subsection{Distribution of forms}

In this section, we fix a $\xi \in \Xi_{g,n}(\ff)$ which admits a
generic, non-supersingular Newton polygon.   Recall that
$\ff_{q(\xi)}$ is the minimal field of definition of a split
representative for $\xi$.  

\begin{lemma}
\label{lemmostacirc}
\begin{equation}
\label{eqastarxi}
\lim_{\ff_q/\ff_{q(\xi)}} \frac{\#\st{(X,\lambda) \in \stk
    A_{g,\xi}(\ff_q) : (X,\lambda)[p^n] \iso \xi^{\alpha,\ff_q}\text{
      for some }\alpha \in A_*(\xi)}}{\#\stk A_{g,\xi}(\ff_q)} = 1.
\end{equation}
\end{lemma}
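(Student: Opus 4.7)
The plan is to separate the lemma into a pointwise geometric claim on an open dense subscheme and a Lang--Weil comparison.

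First, I would establish the set-theoretic containment
\[
\stk A_{g,\xi}^\circ(\ff_q) \subseteq \bigl\{(X,\lambda) \in \stk A_{g,\xi}(\ff_q): (X,\lambda)[p^n] \iso \xi^{\alpha,\ff_q}\text{ for some }\alpha \in A_*(\xi)\bigr\}
\]
whenever $\ff_q \supseteq \ff_{q(\xi)}$.  Fix a split model $(G,\lambda)/\ff_{q(\xi)}$ of $\xi$, so that $A(\xi) = \pi_0(\aut_{G,\lambda})(\ff) = \pi_0(\aut_{G,\lambda})(\bar\ff_q)$.  For $s \in \stk A_{g,\xi}^\circ(\ff_q)$ with associated abelian variety $(X,\lambda)/\ff_q$, let $\tilde\xi \in \Xi_{g,\infty}(\ff)_\xi$ be the geometric class of $(X,\lambda)[p^\infty]$, and fix an isomorphism $(X,\lambda)[p^\infty]_{\bar\ff_q} \sriso \tilde G_{\bar\ff_q}$ with some model $\tilde G/\bar\ff_q$ of $\tilde\xi$ satisfying $\tilde G[p^n] = (G,\lambda)_{\bar\ff_q}$.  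Galois descent supplies a $1$-cocycle of $\gal(\bar\ff_q/\ff_q)$ valued in $\aut_{\tilde G}(\bar\ff_q)$ whose class represents the $\ff_q$-form $(X,\lambda)[p^\infty]$ of $\tilde\xi$; its truncation to $p^n$-torsion is a cocycle in $\aut_{G,\lambda}(\bar\ff_q)$ representing $(X,\lambda)[p^n]$, and its further image under $\pi_0$ gives the class $[\alpha] \in A(\xi)^\sharp$ with $(X,\lambda)[p^n] \iso \xi^{\alpha,\ff_q}$ (Lemma \ref{lemclassifytwists}).  By construction $\alpha$ lies in the image of $\aut_{\tilde G}(\bar\ff_q) \to A(\xi)$, which equals $A_*(\xi)$ by Lemma \ref{lemdefamxi}(a), since $s \in \stk A_{g,\infty,\xi}^\circ(\ff)$.

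Second, I would deduce the asymptotic by Lang--Weil.  The hypotheses on $\xi$ ensure via the Chai--Oort theorem cited just above that $\stk A_{g,\xi}$ is geometrically irreducible of some dimension $d$, and $\stk A_{g,\xi}^\circ$ is open and dense in it: by definition it is the intersection of the open dense $\stk A_{g,\infty,\xi}^\circ$ (Lemma \ref{lemdefamxi}) with the complement of the supersingular locus, which cannot contain any component of $\stk A_{g,\xi}$ because $\xi$ is non-supersingular.  Standard Lang--Weil estimates then give
\[
\#\stk A_{g,\xi}(\ff_q) = q^d + O(q^{d-1/2}), \qquad \#\bigl(\stk A_{g,\xi} \setminus \stk A_{g,\xi}^\circ\bigr)(\ff_q) = O(q^{d-1}),
\]
so the ratio in \eqref{eqastarxi} is bounded below by $\#\stk A_{g,\xi}^\circ(\ff_q)/\#\stk A_{g,\xi}(\ff_q) = 1 - O(q^{-1/2})$, which tends to $1$.

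The main obstacle is the naturality statement in the first step: one must verify that the conjugacy class $[\alpha] \in A(\xi)^\sharp$ representing the $\ff_q$-form $(X,\lambda)[p^n]$ is literally the image of the descent class representing $(X,\lambda)[p^\infty]$ under truncation.  This is a functoriality property of the nonabelian $H^1$-bijection of Lemma \ref{lemclassifytwists} with respect to the truncation morphism $\aut_{\tilde G} \to \aut_{G,\lambda}$; once it is in hand the rest is routine descent-theoretic bookkeeping and Lang--Weil.
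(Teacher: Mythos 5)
Your overall skeleton is the same as the paper's: reduce to the pointwise claim that every point of the open dense locus $\stk A^\circ_{g,\xi}$ has $p^n$-torsion of the form $\xi^{\alpha,\ff_q}$ with $\alpha\in A_*(\xi)$, and then let $q\to\infty$. The second half is fine: since the lemma only asserts a limit (no uniformity in $\xi$), density of $\stk A^\circ_{g,\xi}$ plus Lang--Weil (with constants allowed to depend on $\xi$) does the job, and irreducibility via Chai--Oort is available under the standing hypotheses, though even without it a componentwise dimension count would suffice.

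The gap is in your first step. The cocycle argument requires a reference object equipped with an $\ff_q$-structure, and you take ``a model $\til G/\bar\ff_q$ of $\til\xi$ satisfying $\til G[p^n]=(G,\lambda)_{\bar\ff_q}$'': over $\bar\ff_q$ there is no Galois action, hence no descent cocycle valued in $\aut_{\til G}(\bar\ff_q)$. What your argument actually needs is a model of $\til\xi$ \emph{over $\ff_q$} whose $p^n$-truncation is $\ff_q$-isomorphic to the split model $(G,\lambda)_{\ff_q}$, and the existence of such a model is never justified --- indeed it is essentially equivalent to the statement being proved, since untwisting $(X,\lambda)[p^\infty]$ so that its truncation becomes split requires precisely that the class of $(X,\lambda)[p^n]$ lie in the image of the level-$\infty$ automorphisms. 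The only $\ff_q$-model of $\til\xi$ at hand is $(X,\lambda)[p^\infty]$ itself, whose truncation is the unknown form; if you truncate its descent class relative to any other available $\ff_q$-model $\til H$, you compute the class of $(X,\lambda)[p^n]$ as a twist of $\til H[p^n]$, not of $(G,\lambda)_{\ff_q}$, and the resulting base-point shift by the (unknown) class of $\til H[p^n]$ need not lie in $A_*(\xi)$. So the conclusion that $[\alpha]$ meets $A_*(\xi)$ does not follow from descent formalism alone; the functoriality of nonabelian $H^1$ under truncation, which you flag as the main obstacle, is in fact the routine part. Closing the gap requires a genuinely geometric input that pins down the base point, e.g.\ the mechanism underlying Lemma \ref{lembasicestimate}: the surjectivity of $\rho_{\til\xi,\xi}$ onto $A(\til\xi,\xi)$ together with the trivializing cover of Corollary \ref{cortrivtop}, which identifies the twist class of $(X,\lambda)[p^n]$ at a rational point of $\stk A_{g,\til\xi}$ with the image of its Frobenius, hence places it in $A(\til\xi,\xi)=A_*(\xi)$.
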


In \eqref{eqastarxi} and similar constructions, the limit is over
ever-larger extensions of $\ff_{q(\xi)}$.

\begin{proof}
If the moduli point of $(X,\lambda)$ is contained in $\stk
A_{g,\xi}^\circ(\ff_q)$ (Lemma \ref{lemdefamxi}), then the isomorphism
class of $(X,\lambda)[p^n]$ is represented by a suitable $\alpha$.
\end{proof}

Unfortunately, except in certain special cases (such as the {\em
  minimal} \btn's discussed below), we have little 
information on the topology of the complement $\stk A_{g,\xi} \setcomp \stk
A_{g,\xi}^\circ$, and thus no explicit control over the rate of
convergence in \eqref{eqastarxi}.  Thus, at present, in order to make
statements with uniform bounds such as Theorem \ref{thmainacirc}, we
need to condition on membership in $\stk A_{g,\xi}^\circ(\ff_q)$.

\begin{lemma}
\label{lemboundgroup}
For each natural number $n$, there exists $B= B(g,n,p)$ such that the
following holds.  Let $\til \xi \in \Xi_{g,\infty}(\ff)$, and let
$\xi = \til \xi[p^n]$.  Then
\[
\abs{A(\til \xi, \xi)} < B.
\]
\end{lemma}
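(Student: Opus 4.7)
The plan is to reduce the lemma to a uniform bound on the larger invariant $|A(\xi)| = |\pi_0(\aut_{G,\lambda})(\bar\ff)|$: since $A(\til\xi,\xi) = \im(\pi_0(\rho))$ is by construction a subgroup of $\pi_0(\aut_{G,\lambda})(\bar\ff) = A(\xi)$, it suffices to produce a constant $B = B(g,n,p)$ with $|A(\xi)| < B$ uniformly in $\xi \in \Xi_{g,n}(\ff)$.

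To obtain such a uniform bound I would realize $\aut_{G,\lambda}$ explicitly as a closed subscheme of a fixed affine scheme over $\bar\ff$, and then apply a Bezout-type estimate on the number of connected components. Fixing a basis for the covariant Dieudonn\'e module $M = \dieu(G)$ of a split representative $(G,\lambda) \in \xi(\ff_q)$, $M$ becomes free of rank $2g$ over $W_n(\bar\ff)$, equipped with matrices $F$, $V$ for Frobenius and Verschiebung and a matrix $J$ for the polarization form. Under this identification $\aut_{G,\lambda}(\bar\ff)$ is the subgroup of $\gl_{2g}(W_n(\bar\ff))$ consisting of matrices $A$ satisfying the Dieudonn\'e commutation relations $FA = A^\sigma F$ and $AV = V A^\sigma$ (with $\sigma$ the Witt-vector Frobenius), together with the polarization relation $A^t J A = J$. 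Viewed as a scheme over $\bar\ff$, the ambient $\gl_{2g}(W_n)$ is an open subscheme of $\mat_{2g}(W_n) \iso \mathbb{A}^{4g^2 n}$, and the defining equations above are polynomials in these coordinates whose degree is bounded by a function of $p$ and $n$ alone (arising from the Witt-multiplication polynomials and from the coordinatewise $p$-th power action of $\sigma$), with $\bigoh(g^2 n)$ equations in total.

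A Bezout-type bound on the number of irreducible components of an affine variety cut out by polynomials of bounded degree and bounded quantity then yields $|A(\xi)| = |\pi_0(\aut_{G,\lambda})(\bar\ff)| \le B(g,n,p)$ for an explicit function $B$, completing the proof. The main (and only) obstacle is tracking the Witt-polynomial degrees carefully enough to invoke the Bezout bound; since only qualitative finiteness of $B$ is required, this amounts to routine bookkeeping. A purely structural alternative, which I mention but would not pursue here, would be to observe that $\aut(\til G, \til\lambda)$ is an open subgroup of the unitary group of the semisimple $\rat_p$-algebra $\End^0(\til G)$, whose total dimension is bounded by $4g^2$ (by Dieudonn\'e-Manin) and which gives rise to only finitely many algebraic groups as $\til\xi$ varies over the finitely many Newton strata; but comparing this $\rat_p$-algebraic component group with the $\bar\ff$-scheme-theoretic component group adds complications that the Bezout approach neatly sidesteps.
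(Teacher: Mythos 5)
Your route is genuinely different from the paper's, and the two are worth contrasting. The paper never tries to bound $A(\xi)$ itself; it bounds only the liftable image $A(\til\xi,\xi)$, and does so by the "structural alternative" you mention and set aside. That alternative is simpler than you suggest, because no comparison of a $\rat_p$-algebraic component group with the $\bar\ff$-scheme-theoretic one is needed: any two automorphisms of $\til G$ inducing the same automorphism of $G=\til G[p^n]$ differ by $p^n\End(\til G)$, so the image of $\aut(\til G,\til\lambda)\ra\aut(G,\lambda)$, and a fortiori its further image $A(\til\xi,\xi)$ in $\pi_0$, is a subquotient of $\bigl(\End(\til G)/p^n\bigr)\units$. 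Since $\End(\til G)$ is a $\integ_p$-order in $\oplus\,\mat_{m(c/d)}(D_{c/d})$, of $\rat_p$-dimension at most $4g^2$, and only finitely many Newton polygons occur for fixed $g$, one gets $\abs{A(\til\xi,\xi)}\le p^{4g^2n}$ in a couple of lines. Your approach aims at the stronger statement that $\abs{A(\xi)}$ is bounded uniformly over the (infinitely many, for $n\ge 2$) classes $\xi\in\Xi_{g,n}(\ff)$, which the paper neither claims nor needs; the reduction $A(\til\xi,\xi)\subseteq A(\xi)$ is fine, and the Bezout bookkeeping is indeed routine (though the degrees of the Witt-vector polynomials after composing $2g$-fold matrix sums depend on $g$ as well as on $p,n$ -- harmless, since dependence on $g$ is allowed).

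The one step in your argument that needs a genuine justification, and is currently passed over with ``under this identification,'' is the passage from the matrix-equation scheme to $\pi_0(\aut_{G,\lambda})$. Classical Dieudonn\'e theory identifies $\aut_{G,\lambda}(R)$ with the solution set of your semilinear equations only for \emph{perfect} $R$; the group scheme $\aut_{G,\lambda}$ is defined by its functor on all $\bar\ff$-algebras, and a mere bijection on $\bar\ff$-points between it and your closed subscheme of the Greenberg transform does not transfer a count of connected components from one to the other. To repair this you should argue that the two functors agree on perfect $\bar\ff$-algebras (Dieudonn\'e theory over perfect rings) and then pass to perfections, which are homeomorphisms on underlying spaces, so the component counts coincide; alternatively use a display-theoretic or crystalline model of finite type. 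With that inserted, your argument does give a uniform bound on $\abs{A(\xi)}$, at the cost of machinery the paper's two-line endomorphism-order argument avoids.
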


\begin{proof}
Let $(G,\lambda)$ be a pqp \btinfinity representing $\xi$; it suffices
to uniformly bound the size of the (discrete, abstract) group
$(\End(G,\lambda)/p^n)\units$, and thus a uniform bound on
$\abs{(\End(G)/p^n)\units}$ suffices.  Let $m(c/d)$ be the
multiplicity of the rational number $c/d$ in the Newton polygon of
$G$.  Then $\End(G)$ is an order in $\oplus \mat_{m(c/d)}(D_{c/d})$,
where $D_{c/d}$ is the $\rat_p$-algebra with invariant $c/d$, and thus
$\abs{(\End(G)/p^n)\units}$ admits an upper bound depending only on
$p$, $n$ and the Newton polygon of $G$.  Since, for fixed $g$, there
are only finitely many Newton polygons which arise, the result
follows.
\end{proof}

\begin{lemma}
\label{lembasicestimate}
Let $\til \xi \in \Xi_{g,\infty}(\ff)$ be
  non-supersingular, and let $\xi = \til \xi[p^n]$.  For each $\alpha
  \in A(\til \xi, \xi)$ and each finite extension $\ff_q$ of
  $\ff_{q(\xi)}$,
\[
\frac{
\#\st{ (X,\lambda)\in \stk A_{g,\til\xi}(\ff_q): (X,\lambda)[p^n] \iso \xi^{\alpha,\ff_q}}}{\# \stk A_{g,\til\xi}(\ff_q)}
 = \oneover{\# Z_{A(\til \xi, \xi)}(\alpha)} + \bigoh_{g,n,p}(1/\sqrt q).
 \]
\end{lemma}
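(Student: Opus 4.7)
The plan is to construct a finite étale Galois cover $\phi:U\to\stk A_{g,\til\xi}$ with structure group $A(\til\xi,\xi)$ whose Frobenius conjugacy classes record the form class of $(X,\lambda)[p^n]$, and then invoke the function-field Chebotarev theorem with a uniform error coming from Corollary \ref{cortrivtop} and Lemma \ref{lemboundgroup}.

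To build $U$, fix a split model $\til{(G,\lambda)}/\ff_{q(\til\xi)}$ of $\til\xi$ and set $(G,\lambda)=\til{(G,\lambda)}[p^n]$. On the central leaf $\stk A_{g,\til\xi}$, the universal $p$-divisible group $\stk X[p^\infty]$ is geometrically fiberwise isomorphic to $\til{(G,\lambda)}$, so the isomorphism sheaf
\[
\mathcal I \;=\; \underline{\Isom}\bigl(\stk X[p^\infty]|_{\stk A_{g,\til\xi}},\ \til{(G,\lambda)}\bigr)
\]
is a torsor under the affine group scheme $\aut_{\til{(G,\lambda)}}$. Pushing it out along the composition
\[
\aut_{\til{(G,\lambda)}} \twoheadrightarrow \pi_0(\aut_{\til{(G,\lambda)}}) \twoheadrightarrow A(\til\xi,\xi),
\]
where the second arrow is the truncation map (surjective by the very definition of $A(\til\xi,\xi)$), yields a torsor $U\to\stk A_{g,\til\xi}$ under the \emph{constant} étale group $A(\til\xi,\xi)$, which is therefore a finite étale Galois cover. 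For $s\in\stk A_{g,\til\xi}(\ff_q)$, the connected-étale sequence together with Lang's theorem, applied exactly as in the proof of Lemma \ref{lemclassifytwists}, identifies the class of $\mathcal I_s$ in $H^1(\ff_q,\aut_{\til{(G,\lambda)}})$ with a conjugacy class in $\pi_0(\aut_{\til{(G,\lambda)}})$; its image in $A(\til\xi,\xi)^\sharp$ is precisely the form class of $(X_s,\lambda_s)[p^n]$ as a twist of $\xi$, i.e., the Frobenius conjugacy class of $\phi$ at $s$.

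With $d=\dim\stk A_{g,\til\xi}$, decomposing $H^*_c(U_{\bar\ff_q},\bar\rat_\ell)$ under the $A(\til\xi,\xi)$-action and applying the Lefschetz trace formula yields
\[
\#\st{s\in\stk A_{g,\til\xi}(\ff_q) : (X_s,\lambda_s)[p^n]\iso\xi^{\alpha,\ff_q}} \;=\; \frac{q^d}{\#Z_{A(\til\xi,\xi)}(\alpha)} + \bigoh\bigl(\sigma_c(U)\,q^{d-1/2}\bigr),
\]
while Lang--Weil gives $\#\stk A_{g,\til\xi}(\ff_q)=q^d+\bigoh(\sigma_c(\stk A_{g,\til\xi})\,q^{d-1/2})$. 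To get a uniform error, invoke Corollary \ref{cortrivtop} (with $N=n$) to produce a finite surjective morphism $V_{\til\xi,n}\to\stk A_{g,\til\xi}$ with $\sigma_c(V_{\til\xi,n})<C(g,n,p)$; since compact Betti numbers do not grow under proper surjections, $\sigma_c(\stk A_{g,\til\xi})\le C$. Combined with the degree bound $\#A(\til\xi,\xi)\le B(g,n,p)$ from Lemma \ref{lemboundgroup} and the standard cohomological bound for finite étale pushforwards, this yields $\sigma_c(U)\le D(g,n,p)$. Dividing the two point counts then produces the asserted $\bigoh_{g,n,p}(1/\sqrt q)$ error.

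The subtlest step is the construction and identification of $U$: one has to pass from the fppf torsor $\mathcal I$, whose structure group is neither étale nor finite, to an honest étale Galois cover with the \emph{constant} finite group $A(\til\xi,\xi)$, and then verify carefully that its Frobenius classes record the form class of the truncated $p^n$-torsion rather than that of the full $p$-divisible group. Everything else is a standard application of the Weil bounds, once the topological inputs from Section \ref{secstratifications} are in hand.
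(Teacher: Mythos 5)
Your overall strategy is the same as the paper's (which routes the computation through the monodromy on $\stk A_{g,\til\xi}$, the topological bound from Corollary \ref{cortrivtop}, and the group-size bound from Lemma \ref{lemboundgroup}), but two steps in your argument have genuine gaps.

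First, for the Lefschetz argument to produce the main term $\frac{q^d}{\#Z_{A(\til\xi,\xi)}(\alpha)}$, you need $U_{\bar\ff_q}$ to be \emph{connected} --- equivalently, the geometric monodromy $\pi_1(\stk A_{g,\til\xi})\to A(\til\xi,\xi)$ must be surjective. Nothing in the pushout construction forces this: a priori, the $A(\til\xi,\xi)$-torsor obtained by pushing out $\mathcal I$ could be induced from a proper subgroup $H$, in which case $H^{2d}_c(U_{\bar\ff_q})$ would have dimension $[A(\til\xi,\xi):H]$ and the count would equidistribute over $H^\sharp$, not $A(\til\xi,\xi)^\sharp$. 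The missing input is the Chai--Oort monodromy theorem \cite[Thm.\ 5.6]{chaioort11}: the na\"ive monodromy $\pi_1(\stk A_{g,\til\xi},s) \to \aut(\til\xi)$ is surjective, and since $A(\til\xi,\xi)$ is by definition the image of $\pi_0(\aut(\til\xi))$ in $\pi_0(\aut(\xi))$, the composite onto $A(\til\xi,\xi)$ is surjective. This is a substantive geometric fact, not a formality, and it must be cited.

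Second, the ``standard cohomological bound for finite \'etale pushforwards'' you invoke to pass from $\sigma_c(\stk A_{g,\til\xi})$ and $\deg(U/\stk A_{g,\til\xi})$ to a bound on $\sigma_c(U)$ does not exist in positive characteristic: for a finite \'etale $f:U\to V$ of degree $m$ one has $H^i_c(U)=H^i_c(V,f_*\bar\rat_\ell)$, but $\dim H^i_c(V,\mathcal L)$ for a rank-$m$ lisse sheaf $\mathcal L$ is \emph{not} bounded by $m\cdot\dim H^i_c(V,\bar\rat_\ell)$ when wild ramification occurs at the boundary (Artin--Schreier covers of $\mathbb A^1$ already give counterexamples, and the paper itself flags precisely this issue in Section \ref{subsecexphys} in connection with \cite{kowalskisieve}). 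The correct argument, and the one the paper uses via \cite[Thm.\ 9.2.6(4)]{katzsarnak}, is to exploit that $V_{\til\xi,n}$ \emph{trivializes} the cover $U$: the base change $V_{\til\xi,n}\times_{\stk A_{g,\til\xi}} U\iso V_{\til\xi,n}\times A(\til\xi,\xi)$ has $\sigma_c \le B\cdot C$, and since this maps finitely and faithfully flatly onto $U$ (with degree coprime to $\ell$, after choosing $\ell$ suitably), one gets $\sigma_c(U)\le B\cdot C$. You already have both $V_{\til\xi,n}$ and the bound $B$ in hand, so this is a small repair, but as written the step is unjustified.
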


In other words, there exists $D = D(g,n,p)$ such that 
\[
\abs{
\frac{
\#\st{ (X,\lambda)\in \stk A_{g,\til\xi}(\ff_q): (X,\lambda)[p^n] \iso \xi^{\alpha,\ff_q}}}{\# \stk A_{g,\til\xi}(\ff_q)}
- \oneover{\# Z_{A(\til \xi, \xi)}(\alpha)}
}
< \frac{D}{\sqrt q}.
\]

\begin{proof}
Let $s \in \stk A_{g,\til\xi}$ be a geometric point.  By \cite[Thm.\ 5.6]{chaioort11}, the na\"ive monodromy representation
\begin{diagram}
\pi_1(\stk A_{g,\til\xi},s) & \rto & \aut(\stk X_s[p^\infty],\lambda_s[p^\infty])) \iso \aut(\til\xi)
\end{diagram}
is surjective.  Thus, the image of the composite homomorphism
\begin{diagram}
\rho_{\til\xi, \xi}:\pi_1(\stk A_{g,\til\xi},s) & \rto & \aut(\til\xi) & \rto & \aut(\xi) & \rto & \pi_0(\aut(\xi))
\end{diagram}
is all of $A(\til \xi, \xi)$.   Moreover, the finite and faithfully
flat morphism $V_{\til \xi, n} \ra \stk A_{g, \til\xi}$ of Corollary
\ref{cortrivtop} trivializes $(\stk X[p^n],\lambda[p^n])$, and thus the
representation $\rho_{\til \xi, \xi}$; and there is a bound $C= C(g,n,p)$ for
the sum $\sigma_c(V_{\til\xi,n})$ of the $\ell$-adic compact Betti numbers of $V_{\til \xi, n}$. The image $A(\til
\xi,\xi)$ of 
$\rho_{\til\xi,\xi}$ has size at most $B = B(g,n,p)$ (Lemma \ref{lemboundgroup}).
The claim now follows from the
explicit Chebotarev-type theorem of Katz and Sarnak \cite[Thm.\ 9.7.13]{katzsarnak}.  By \cite[Thm. 9.2.6.(4)]{katzsarnak}, for the constant they call $C(\stk A_{g,\til\xi}/\spec \ff_{q(\xi)})$, one may take any upper bound for the sum of the compact Betti numbers of an \'etale Galois cover of $\stk A_{g,\til\xi}$ which trivializes $\rho_{\til\xi,\xi}$; our $C$ provides such a bound.
Then Frobenius elements are asymptotically equidistributed in $A(\til
\xi,\xi)$, with error term bounded by $2 B \cdot C / \sqrt q$.  In fact, the Katz-Sarnak estimate only applies when $q$ is at least four times the square of $\sigma_c(\stk A_{g,\til\xi})$; but since this may be bounded uniformly in $g$, $n$ and $p$, the claimed inequality still holds, possibly at the cost of replacing $2B\cdot C$ with a larger constant $D$.
\end{proof}

\begin{theorem}
\label{thmainacirc}
Suppose $\xi \in \Xi_{g,n}(\ff)$ hs a generic Newton polygon which is not supersingular. Then for each $\alpha \in A_*(\xi)$ and each finite extension $\ff_q$ of $\ff_{q(\xi)}$,
\[
\frac{
\#\left\{ (X,\lambda)\in \stk A^\circ_{g,\xi}(\ff_q): (X,\lambda)[p^n] \iso \xi^{\alpha,\ff_q}\right\}}{\# \stk A^\circ_{g,\xi}(\ff_q)}
=
\oneover{\# Z_{A_*(\xi)}(\alpha)}
+ \bigoh_{g,n,p}(1/\sqrt q).
\]
\end{theorem}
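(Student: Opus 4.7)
The plan is to partition $\stk A_{g,\xi}^\circ(\ff_q)$ into central leaves and sum the leafwise estimate of Lemma \ref{lembasicestimate}, exploiting the fact that the constant $D$ there depends only on $g$, $n$ and $p$ and not on the leaf.

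By the definition of $\stk A_{g,\xi}^\circ$ (the non-supersingular part of $\stk A_{g,\infty,\xi}^\circ$) and Lemma \ref{lemdefamxi}(b) with $m=\infty$, every $\ff_q$-point of $\stk A_{g,\xi}^\circ$ lies in a unique central leaf $\stk A_{g,\til\xi}$ for some non-supersingular $\til\xi \in \Xi_{g,\infty}^\circ(\ff)_\xi$. Let $S_q$ denote the (necessarily finite) set of such $\til\xi$ for which $\stk A_{g,\til\xi}(\ff_q)\neq \emptyset$. Then
\[
\stk A_{g,\xi}^\circ(\ff_q) = \bigsqcup_{\til\xi \in S_q} \stk A_{g,\til\xi}(\ff_q).
\]
By the very definition of $\Xi_{g,\infty}^\circ(\ff)_\xi$, for each $\til\xi \in S_q$ one has $A(\til\xi,\xi) = A_*(\xi)$ and hence $\#Z_{A(\til\xi,\xi)}(\alpha) = \#Z_{A_*(\xi)}(\alpha)$ for every $\alpha \in A_*(\xi)$.

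Now apply Lemma \ref{lembasicestimate} to each $\til\xi \in S_q$ with the single constant $D = D(g,n,p)$ to obtain the leafwise bound
\[
\left| \#\{(X,\lambda) \in \stk A_{g,\til\xi}(\ff_q) : (X,\lambda)[p^n] \iso \xi^{\alpha,\ff_q}\} - \frac{\#\stk A_{g,\til\xi}(\ff_q)}{\#Z_{A_*(\xi)}(\alpha)} \right| < \frac{D}{\sqrt q}\cdot \#\stk A_{g,\til\xi}(\ff_q).
\]
Summing these inequalities over $\til\xi \in S_q$ (a finite sum, since the left-hand side of the decomposition is finite) and invoking the triangle inequality collapses the contributions into
\[
\left| \#\{(X,\lambda) \in \stk A_{g,\xi}^\circ(\ff_q) : (X,\lambda)[p^n] \iso \xi^{\alpha,\ff_q}\} - \frac{\#\stk A_{g,\xi}^\circ(\ff_q)}{\#Z_{A_*(\xi)}(\alpha)} \right| < \frac{D}{\sqrt q}\cdot \#\stk A_{g,\xi}^\circ(\ff_q).
\]
Dividing by $\#\stk A_{g,\xi}^\circ(\ff_q)$ yields the stated equidistribution.

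The only substantive point is the uniformity of $D$ across the (generically infinitely many) central leaves $\stk A_{g,\til\xi}$ covering $\stk A_{g,\xi}^\circ$; this is precisely what the machinery of Sections \ref{secpointwise}--\ref{secstratifications} delivers, via the uniform bound $\abs{A(\til\xi,\xi)} < B(g,n,p)$ of Lemma \ref{lemboundgroup}, the uniform topological bound $\sigma_c(V_{\til\xi,n}) < C(g,n,p)$ of Corollary \ref{cortrivtop}, and the explicit Katz--Sarnak Chebotarev estimate. With this uniformity in hand, the passage from the leafwise statement of Lemma \ref{lembasicestimate} to the global statement of the theorem is a bookkeeping exercise; without it, the sum of per-leaf errors could not be controlled in the large-$q$ limit.
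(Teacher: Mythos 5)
Your proposal is correct and follows essentially the same route as the paper: decompose $\stk A^\circ_{g,\xi}(\ff_q)$ into the central leaves $\stk A_{g,\til\xi}(\ff_q)$ with $\til\xi \in \Xi_{g,\infty}^\circ(\ff)_\xi$, apply Lemma \ref{lembasicestimate} leafwise, and use the uniformity in $g$, $n$, $p$ of the error constant (via Lemmas \ref{lemboundgroup} and Corollary \ref{cortrivtop}) to sum the contributions. Your explicit remarks that only finitely many leaves contribute $\ff_q$-points and that $A(\til\xi,\xi)=A_*(\xi)$ on the locus $\stk A^\circ_{g,\xi}$ are exactly the bookkeeping the paper leaves implicit.
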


\begin{proof}
Decompose $\stk A^\circ_{g,\xi}(\ff_q)$ as a disjoint union
\begin{align*}
\stk A_{g,\xi}^{\circ}(\ff_q) &= \cup_{\til \xi \in \Xi_{g,\infty}^\circ(\ff)_\xi} \stk A_{g,\til\xi}(\ff_q)\\
\intertext{and then apply Lemma \ref{lembasicestimate}:}
\# \st{ (X,\lambda)\in \stk A^\circ_{g,\xi}(\ff_q): (X,\lambda)[p^n] \iso \xi^{\alpha,\ff_q}} &= \sum_{\til \xi \in \Xi_{g,\infty}^\circ(\ff)_\xi} 
\#\st{ (X,\lambda)\in \stk A_{g,\til\xi}(\ff_q): (X,\lambda)[p^n] \iso \xi^{\alpha,\ff_q}}\\
&= 
\sum_{\til \xi \in \Xi_{g,\infty}^\circ(\ff)_\xi} 
\# \stk A_{g,\til\xi}(\ff_q) \cdot \left( \oneover{\#Z_{A_*(\xi)}(\alpha)}+ \bigoh_{g,n,p}(1/\sqrt q)\right) \\
&= \# \stk A_{g,\xi}^\circ(\ff_q)\cdot \left( \oneover{\#Z_{A_*(\xi)}(\alpha)}+ \bigoh_{g,n,p}(1/\sqrt q)\right).
\end{align*}
\end{proof}

\section{Examples and complements}
\label{secexample}

We close with some explicit examples of the groups $A(\xi)$ defined in Section \ref{secpointwise}, and a brief discussion of related work in the literature concerning the distribution of $p$-power torsion group schemes in abelian varieties.

\subsection{Simple minimal $p$-divisible groups}
\label{subsecexminimal}

Fix relatively prime natural numbers $c$ and $d$; let $h = c+d$ and
let $\mu = d/(c+d)$; and work over any perfect field $K$
containing $\ff_{p^h}$. Then there is a minimal $p$-divisible group
$H_{c,d}/K$ of height $h$ and slope $\mu$.  It is characterized by
the fact that its endomorphism ring is maximal among all such
$p$-divisible groups; $\End(H_{c,d}) \iso \calo_\mu$, the
maximal order in the $\rat_p$-division algebra of Brauer invariant
$\mu$ \cite{oortminimal}. An important fact about minimal $p$-divisible groups is that
they are characterized by their $p$-torsion; if $G$ is any
$p$-divisible group with $G[p] \iso H_{c,d}[p]$, then $G \iso H_{c,d}$.

On one hand, $\aut(H_{c,d})$ is the pro-\'etale group
$\calo_\mu\units$.  On the other hand, a direct calculation shows that $\pi_0(\aut(H_{c,d}[p])) \iso \ff_{p^h}\units$.  

Rather than prove these statements in complete generality, we work out the
special, but representative, case where $(c,d) = (2,1)$.  Then
$M := \dieu_*(H_{2,1})$ admits a $W(K)$-basis $\st{x_1,x_2,x_3}$ on which
the action of Frobenius and Verschiebung are given by
\begin{align*}
F x_1 &= x_2 & Vx_1 &= x_3 \\
F x_2 &= x_3 & Vx_2 &= px_1 \\
Fx_3 &= px_1 & Vx_3 &= px_2
\end{align*}
Consider an automorphism of $\alpha$ of $M$ as $W(K)$-module.  Then
$\alpha$ is an automorphism of $M$ as Dieudonn\'e module if and only
if
\begin{align*}
A \cdot [F] &= [F] \cdot A^\sigma \\
A^\sigma \cdot [V] &= [V] \cdot A
\end{align*}
where $A= (a_{ij})$ is the matrix of $\alpha$ in the chosen basis, and $[F]$ and
$[V]$ are similarly given by
\begin{align*}
\null [F] &= \begin{pmatrix}
0 & 0 & p \\
1 & 0 & 0 \\
0 & 1 & 0
\end{pmatrix} \\
\null [V] &= \begin{pmatrix}
0 & p & 0 \\ 
0 & 0 & p \\
1 & 0 & 0
\end{pmatrix}.
\end{align*}
Direct calculation shows that $A$ must be of the form
\[
A =
\begin{pmatrix}
a_{11} & p a_{31}^\sigma & p a_{32}^\sigma\\
a_{32}^{\sigma^2} & a_{11}^\sigma & a_{31}^{\sigma^2} \\
a_{31} & a_{32} & a_{11}^{\sigma^2}
\end{pmatrix}
\]
where $a_{11}^{\sigma^3} = a_{11}$, i.e., $a_{11} \in W(\ff_{p^3})$,
and $a_{31}$ and $a_{32}$ similarly lie in $W(\ff_{p^3})$.

We now consider the matrix $\bar A$ of an automorphism $\bar A$ of
$M/pM = \dieu_*(H_{1,2}[p])$.  The general form of such an
automorphism is
\[
\bar A =
\begin{pmatrix}
\bar a_{11} & 0 & 0 \\
\bar a_{2,1} & \bar a_{1,1}^\sigma & 0 \\
\bar a_{3,2} &\bar a_{2,1}^\sigma & \bar a_{1,1}^{\sigma^2}
\end{pmatrix}
\]
where $\bar a_{1,1}^{\sigma^3} = \bar a_{1,1}$.  Consequently,
$\aut(H_{1,2}[p])$ sits in an exact sequence
\begin{diagram}
1 & \rto & \aut(H_{1,2}[p])^0 & \rto & \aut(H_{1,2}[p]) & \rto &
\ff_{p^3}\units & \rto & 1,
\end{diagram}
where the connected component of identity is a two-dimensional unipotent group. 

For general $(c,d)$, one finds that 
\begin{align*}
\pi_0(\aut(H_{c,d}[p])) & \iso \ff_{p^h}\units \\
\intertext{while }
\pi_0(\aut(H_{c,d}))/p & \iso (\calo_\lambda/p)\units \iso \gl_h(\ff_{p^h}).
\end{align*}
Furthermore, although
\begin{diagram}
\aut(H_{c,d}) & \rto & \aut(H_{c,d}[p])
\end{diagram}
is far from surjective -- the source is pro-\'etale, while the target has positive dimension -- the induced map
\begin{diagram}
\aut(H_{c,d}) & \rto & \aut(H_{c,d}[p]) & \rto & \pi_0(\aut(H_{c,d}[p]))
\end{diagram}
{\em is} surjective.

\subsection{Simple pqp minimal $p$-divisible groups}
\label{subsecexpqpminimal}

Now suppose that $(c,d) \not = (1,1)$, and let $G_{\mu,1-\mu} = H_{c,d} \oplus H_{d,c}$.  There is a canonical principal $\lambda_{\mu,1-\mu}$ on $G_{\mu,1-\mu}$
quasipolarization $\lambda_{\mu}$ on $H_{c,d} \oplus H_{d,c}$.  One has
\[
\aut(G_{\mu,1-\mu},\lambda_{\mu,1-\mu}) \iso \aut(H_{c,d})
\]
and the analogous statement holds for the $p$-torsion group schemes, as well.  Let $\til\xi \in \Xi_{2h,\infty}(\ff)$ represent the geometric isomorphism class of $(G_{\mu,1-\mu},\lambda_{\mu,1-\mu})$, and let $\xi \in \Xi_{2h,1}(\ff)$ be its $p$-torsion. Since $G_{\mu,1-\mu}$ is minimal, we have
\[
\stk A_{2h,\xi} = \stk A_{2h,\til\xi} = \stk A^\circ_{2h,\xi}.
\]
Moreover, 
\[
A(\til \xi, \xi) = A_*(\xi) \iso \ff_{p^h}\units.
\]

\subsection{Supersingular elliptic curves}
\label{subsecexss}

In contrast to Section \ref{subsecexpqpminimal}, we now consider the
case where $c=d=1$, and restrict our discussion to fields which contain $\ff_{p^2}$. Over an algebraically closed field $k$ of characteristic $p$, $H_{1,1,k}$ is isomorphic to the $p$-divisible group
of any supersingular elliptic curve over $k$, and $H_{1,1}[p]_k$ to its $p$-torsion.  Let $\lambda$ be a principal quasipolarization on $H_{1,1}$.  If $a \in \ff_{p^2}\units \subset \aut(H_{1,1}[p])$, then $a$ scales $\lambda$ by $a a^\sigma$.  Consequently, 
\begin{align*}
\pi_0(\aut(H_{1,1}[p])) & \iso \ff_{p^2}\units \\
\pi_0(\aut(H_{1,1}[p],\lambda[p])) & \iso \st{ a \in \ff_{p^2}\units: a a^\sigma = 1}.
\end{align*}
In particular, we find that $H_{1,1}[p]$ admits $p^2-1$ distinct
$\ff_q$-forms; as a principally quasipolarized \btone, it still admits
$p+1$ distinct $\ff_q$-forms.  (Unfortunately, this is not in accordance with \cite[Sec.\
6]{liedtke11}; it seems that, in the cited work, noncanonical
identifications of $H_{1,1}[F]$ and its cokernel with $\aalpha_p$ led to an incorrect conclusion.)

In fact, following a suggestion of Oort, we can be explicit about these different forms.  We work over a perfect field $K$ containing $\ff_{p^2}$.  Define a map
\begin{diagram}
K\units & \rto^\tau & K\units \\
a  & \rmto & a^\sigma/a^{\sigma\inv}.
\end{diagram}
We will exhibit a bijection between forms of $H_{1,1}[p]$ and the quotient $K\units/\tau(K\units)$.  (Note that $\ker \tau = \ff_{p^2}\units$ and thus, if $K$ is a finite field, $K\units/\tau(K\units)\iso \ff_{p^2}\units$.)

  Let $\bar M = \dieu_*(H_{1,1}[p])$; since $\dim
  \hom(\aalpha_p, H_{1,1}[p]) = 1$, $\bar M$ is generated as a $K[F,V]$-module by a single element.  (In the coordinates introduced in Section \ref{subsecexminimal}, the element $x_1$ suffices.)
Moreover, any twist $\bar N$ of $\bar M$ has this property, too.

For such an $\bar N$ and generator $x$, let $y = Fx$ and define $\lambda_{\bar N,x} \in K\units$ by $Vx = \lambda_{\bar N,x}y$. Any other generator for $\bar N$
is of the form $bx+cy$, where $b\in K\units$ and $c \in K$.  For this generator, we find that 
\begin{align*}
F(bx+cy) &= b^\sigma Fx = b^\sigma y \\
V(bx+cy) &= b^{\sigma\inv}\lambda_{\bar N,x}y\\
&= b^{\sigma\inv}(b^\sigma)\inv \lambda_{\bar N,x} F(bx+cy).
\end{align*}
Therefore, $\bar N$ canonically determines a class $\lambda_{\bar N} \in K\units/\tau(K\units)$.

Conversely, if $a\in K\units$, then we can define a form $\bar M_a$ of $\bar M$ as follows.  As a $K$-vector space, it is generated by $x$ and $y$; and the actions of Verschiebung and Frobenius are respectively (since we use covariant Dieudonn\'e theory) given by 
\begin{align*}
Fx &= y \\
Vx &= ay.
\end{align*}
Then $\lambda_{\bar M_a, x} = a$, and $\lambda_{\bar M_a}$ is the class of $a$ in $K\units/\tau(K\units)$.

\subsection{The ordinary locus}
\label{subsecexord}

The techniques developed in the present paper recover known results on
the structure of the physical $p$-torsion of the ordinary locus.
Indeed, fix a natural number $n$ and a dimension $g$; then the
$p^n$-torsion group scheme of an ordinary abelian variety of dimension
$g$ is geometrically isomorphic to $(\mmu_{p^n}\oplus
\integ/p^n)^{\oplus 2g}$.  There is a unique principal
quasipolarization on this group scheme, and we let $\xi_{g,\ord}$ be
the corresponding pqp \btn.  Then $\aut(\xi_{g,\ord}) \iso
\aut((\integ/p^n)^{\oplus 2g}) \iso \gl_g(\integ/p^n)$.  Any such
automorphism lifts to arbitrarily high level, and $A_*(\xi_{g,\ord})
\iso \gl_g(\integ/p^n)$.   We note, however, that there is a direct
proof (Lemma \ref{lemnaivemono}) of Theorem \ref{thmainacirc} in this
case.

\subsection{Physical $p^n$-torsion}
\label{subsecexphys}

For applications (e.g., \cite{castrycketal12}), it is useful to be
able to quantify the distribution of physical $p$-power torsion.
Although Theorem \ref{thmainacirc} can be adapted to such
considerations, a direct argument is perhaps more transparent.

Let $X \ra S/\ff_{q_0}$ be an abelian scheme of dimension $g$ over a
connected base.  Suppose that $X$ has constant $p$-rank $f \ge 1$, in
the sense that, for each point $s\in S$, $X_s[p](\bar{\kappa(s)}) \iso
(\integ/p)^{\oplus f}$.  Then, after a choice of geometric point $\bar
s$ of $S$, for each natural number $n$ there is a monodromy representation
\begin{diagram}
\pi_1(S,\bar s) & \rto^{\rho_{X/S,n}} & \aut(X[p^n]^\et) \iso
\gl_f(\integ/p^n).
\end{diagram}
If $\ff_q/\ff_{q_0}$ is any extension and if $t \in S(\ff_q)$, then
there is an element $\sigma_{t/\ff_q} =   \rho_{X/S,n}(\frob_t) \in \gl_f(\integ/p^n)$,
well-defined up to conjugacy.  The $q$-power map $a\mapsto a^q$ acts
on $X_t[p^n](\bar\ff_q)$ as $\sigma_{t/\ff_q}$, and this action determines the isomorphism class of $X_t[p^n]^\et$.

\begin{lemma}
\label{lemnaivemono}
With notation as above, suppose that $\rho_{X/S,n}$ is surjective.
For each $\alpha \in \gl_f(\integ/p^n)$ and each $\ff_q/\ff_{q_0}$, 
\begin{equation}
\label{eqnaivemono}
\frac{\#\st{ t\in S(\ff_q) : \sigma_{t/\ff_q} \twiddle \alpha
  }}{\#S(\ff_q) } = 
  \oneover{\#Z_{\gl_f(\integ/p^n)}(\alpha)} +
\bigoh_{X/S}(1/\sqrt q).
\end{equation}
\end{lemma}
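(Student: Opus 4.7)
The plan is to reduce the statement to a direct application of the explicit geometric Chebotarev theorem of Katz--Sarnak, exactly as in the proof of Lemma \ref{lembasicestimate}, but in a much simpler setting where the relevant monodromy cover is already supplied by hypothesis.

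First, I would observe that since $\rho_{X/S,n}$ is surjective, it factors through $\pi_1(S,\bar s) \twoheadrightarrow \gal(S'/S) \iso \gl_f(\integ/p^n)$ for a connected finite \'etale Galois cover $S' \to S$ which trivializes $X[p^n]^\et$. For each point $t\in S(\ff_q)$, the Frobenius element $\frob_t \in \pi_1(S,\bar s)$ (well-defined up to conjugacy) maps to $\sigma_{t/\ff_q} \in \gl_f(\integ/p^n)$; by the discussion preceding the lemma, this conjugacy class records precisely the isomorphism class of $X_t[p^n]^\et$ as an $\ff_q$-group scheme, and counting $t$ with $\sigma_{t/\ff_q}\sim\alpha$ is the same as counting $\ff_q$-points of $S$ whose Frobenius conjugacy class in $\gal(S'/S)$ equals that of $\alpha$.

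Next, I would quote \cite[Thm.\ 9.7.13]{katzsarnak}, which asserts that for a finite Galois \'etale cover $S' \to S$ with group $G$, the proportion of $\ff_q$-points whose Frobenius conjugacy class equals a given conjugacy class $[\alpha]\subset G$ is $\#[\alpha]/\#G + O(1/\sqrt q) = 1/\#Z_G(\alpha) + O(1/\sqrt q)$, with an explicit error bound in terms of $\#G$, $\dim S$, and the sum of the compact $\ell$-adic Betti numbers of $S'$ (or of any cover trivializing the representation). In our setting $\#G = \#\gl_f(\integ/p^n)$ depends only on $f$ and $p^n$, while $\sigma_c(S')$ is a fixed number attached to the data $X/S$; both constants are absorbed into the error term $\bigoh_{X/S}(1/\sqrt q)$.

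There is no real obstacle here. The only point to watch is that the Katz--Sarnak estimate only applies when $q$ exceeds four times $\sigma_c(S)^2$; for smaller $q$ the trivial bound $|\cdot| \le 1$ suffices, at the cost of enlarging the implied constant, which is harmless since we allow the constant to depend on $X/S$. This completes the sketch.
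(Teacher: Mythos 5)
Your proposal is correct and takes essentially the same route as the paper: your connected Galois cover $S'$ trivializing $X[p^n]^\et$ is precisely the (irreducible) scheme $\cali=\Isom(X[p^n]^\et,(\integ/p^n)^{\oplus f})$ that the paper uses, and the result is then a direct geometric Chebotarev application. The only difference is the reference — the paper quotes Serre's Chebotarev theorem for schemes, while you invoke the explicit Katz--Sarnak estimate as in Lemma \ref{lembasicestimate}; since the implied constant may depend on $X/S$, either citation suffices.
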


\begin{proof}
This is presumably well-known.  However, since there seems to be some
uncertainty in the literature (e.g., the discussion surrounding
Principle 3 in \cite{castrycketal12}), we provide a sketch.  The
\'etale sheaf
\[
\cali := \Isom(X[p^n]^\et, (\integ/p^n)^{\oplus f})
\]
on $S$ is represented by a scheme.  The scheme $\cali$ is visibly an
\'etale cover of $S$, with geometric fiber $\gl_f(\integ/p^n)$; the
hypothesis that $\rho_{X/S,n}$ is surjective is equivalent to the
irreducibility of $\cali$.  The claimed equidistribution now follows
from the Chebotarev theorem for schemes \cite[Thm.\ 7]{serrezetaandl}.
\end{proof}

In many applications, there exists a (partial) compactification $X \ra
T$ of the family, and the \'etale cover $\cali \ra S$ extends to a
wildly ramified cover of $T$.  Although some of the explicit estimates of
\cite{katzsarnak} need not apply -- see, for example, the discussion
in \cite[Rem.\ 4.8]{kowalskisieve} -- there is always a suitable
Chebotarev-type theorem, with error term $\bigoh(1/\sqrt q)$.  In the
context of Lemma \ref{lemnaivemono},  the apparent counterexamples in
\cite{kowalskisieve} stem from a na\"ive attempt to estimate the Betti
numbers of $\cali$ in terms of the Betti numbers of $S$ and the degree
of $\gl_f(\integ/p^n)$.  The sum of the compact Betti numbers of
$\cali$, whatever it might be, is the input to an explicit error term in \eqref{eqnaivemono}.

Let $\stk A_g^f \subset \stk A_g$ be the moduli space of principally
polarized abelian varieties of $p$-rank $f$.  The surjectivity of each
$\rho_{X/\stk A_g^g,n}$ is well-known
\cite{ekedahlmono,faltingschai}.  For $1 \le f \le g-1$, the
surjectivity of $\rho_{\stk X/\stk A_g^f,n}$ follows from the surjectivity of $\rho_{\stk X/\stk A_f^f, n}$, the existence of the natural inclusion $\stk A_f^f \cross \stk A_{g-f}^0 \inject \stk A_g^f$, 
and the
irreducibility of $\stk A_g^f$.

\subsection{The torsion of Jacobians}

If $C/\ff_q$ is a smooth, projective curve, then the class group of its function field is isomorphic to $\jac(C)(\ff_q)$.  Consequently, results about the distribution of torsion on (hyperelliptic) {\em Jacobians} have implications for Cohen-Lenstra type results for function fields.  For prime-to-$p$ torsion see, e.g., \cite{achtercag} for fixed $g$, large $q$ results; and \cite{ellenbergvenkateshwesterland} for the much more difficult case of fixed $q$ and large $g$.

For physical $p$-power torsion, and thus for elements of order $p$ in
the class groups of (hyperelliptic) function fields, all the results
surveyed in Sections \ref{subsecexord} and \ref{subsecexphys} have
immediate parallels for $\stk M_g^f$, the moduli space of
hyperelliptic curves of genus $g$ and $p$-rank $f$, and for the
hyperelliptic sublocus $\stk H_g^f \subset \stk M_g^f$.  Indeed, for
each irreducible component $S$ of $\stk M_g^f$ or $\stk H_g^f$, one
knows that the monodromy representation $\pi_1(S) \ra
\gl_f(\integ/p^n)$ is surjective
\cite{achterpriesprank,achterprieshyper}.  Thus, Lemma
\ref{lemnaivemono} computes the distribution of physical $p$-torsion
for (hyperelliptic) Jacobians.

However, it is not clear how to apply the methods of the present paper
to strata $\stk M_{g,\xi}$ or $\stk H_{g,\xi}$.  Indeed, it is not
even known which Newton strata in $\stk M_g$ are nonempty -- see
\cite{achterpriesricam} for a survey of what little is known -- let
alone finer strata $\stk M_{g,\xi}$, to say nothing of their $p$-adic
monodromy.  Thus, a version of Theorem \ref{thmainacirc} for 
hyperelliptic Jacobians remains a distant goal.

\bibliographystyle{habbrv} 
\bibliography{jda}

\def\cprime{$'$}
\begin{thebibliography}{10}

\bibitem{achtercag}
J.~D. Achter.
\newblock Results of {C}ohen-{L}enstra type for quadratic function fields.
\newblock In {\em Computational arithmetic geometry}, volume 463 of {\em
  Contemp. Math.}, pages 1--7. Amer. Math. Soc., Providence, RI, 2008.

\bibitem{achterpriesprank}
J.~D. Achter and R.~Pries.
\newblock Monodromy of the {$p$}-rank strata of the moduli space of curves.
\newblock {\em Int. Math. Res. Not. IMRN}, (15):Art. ID rnn053, 25, 2008.

\bibitem{achterprieshyper}
J.~D. Achter and R.~Pries.
\newblock The {$p$}-rank strata of the moduli space of hyperelliptic curves.
\newblock {\em Adv. Math.}, 227(5):1846--1872, 2011.

\bibitem{achterpriesricam}
J.~D. Achter and R.~Pries.
\newblock Generic {N}ewton polygons for curves of given $p$-rank.
\newblock In {\em Algebraic Curves and Finite Fields: Cryptography and Other
  Applications}, volume~16 of {\em Radon Series on Computational and Applied
  Mathematics}, pages 1--22. de Gruyter, Boston and Berlin, 2014.

\bibitem{caisetal13}
B.~Cais, J.~S. Ellenberg, and D.~Zureick-Brown.
\newblock Random {D}ieudonn\'e modules, random {$p$}-divisible groups, and
  random curves over finite fields.
\newblock {\em J. Inst. Math. Jussieu}, 12(3):651--676, 2013.

\bibitem{castrycketal12}
W.~Castryck, A.~Folsom, H.~Hubrechts, and A.~V. Sutherland.
\newblock The probability that the number of points on the {J}acobian of a
  genus 2 curve is prime.
\newblock {\em Proc. Lond. Math. Soc. (3)}, 104(6):1235--1270, 2012.

\bibitem{chailadic}
C.-L. Chai.
\newblock Monodromy of {H}ecke-invariant subvarieties.
\newblock {\em Pure Appl. Math. Q.}, 1(2):291--303, 2005.

\bibitem{chaioort11}
C.-L. Chai and F.~Oort.
\newblock Monodromy and irreducibility of leaves.
\newblock {\em Ann. of Math. (2)}, 173(3):1359--1396, 2011.

\bibitem{cohenlenstra}
H.~Cohen and H.~W. Lenstra, Jr.
\newblock Heuristics on class groups of number fields.
\newblock In {\em Number theory, Noordwijkerhout 1983 (Noordwijkerhout, 1983)},
  volume 1068 of {\em Lecture Notes in Math.}, pages 33--62. Springer, Berlin,
  1984.

\bibitem{ekedahlmono}
T.~Ekedahl.
\newblock The action of monodromy on torsion points of {J}acobians.
\newblock In {\em Arithmetic algebraic geometry (Texel, 1989)}, pages 41--49.
  Birkh\"auser Boston, Boston, MA, 1991.

\bibitem{evdg}
T.~Ekedahl and G.~van~der Geer.
\newblock Cycle classes of the {E}-{O} stratification on the moduli of abelian
  varieties.
\newblock In {\em Algebra, arithmetic, and geometry: in honor of {Y}u. {I}.
  {M}anin. {V}ol. {I}}, volume 269 of {\em Progr. Math.}, pages 567--636.
  Birkh\"auser Boston Inc., Boston, MA, 2009.

\bibitem{ellenbergvenkateshwesterland}
J.~Ellenberg, A.~Venkatesh, and C.~Westerland.
\newblock Homological stability for {H}urwitz spaces and the {C}ohen-{L}enstra
  conjecture over function fields, December 2009, arXiv:0912.0325.
\newblock arXiv:0912.0325.

\bibitem{faltingschai}
G.~Faltings and C.-L. Chai.
\newblock {\em Degeneration of abelian varieties}.
\newblock Springer-Verlag, Berlin, 1990.
\newblock With an appendix by David Mumford.

\bibitem{gabbervasiu}
O.~Gabber and A.~Vasiu.
\newblock Dimensions of group schemes of automorphisms of truncated
  {B}arsotti-{T}ate groups.
\newblock {\em Int. Math. Res. Not. IMRN}, (18):4285--4333, 2013.

\bibitem{garuti08}
M.~A. Garuti.
\newblock Barsotti-{T}ate groups and {$p$}-adic representations of the
  fundamental group scheme.
\newblock {\em Math. Ann.}, 341(3):603--622, 2008.

\bibitem{harashitagenericnp}
S.~Harashita.
\newblock Generic {N}ewton polygons of {E}kedahl-{O}ort strata: {O}ort's
  conjecture.
\newblock {\em Ann. Inst. Fourier (Grenoble)}, 60(5):1787--1830, 2010.

\bibitem{illusiebt}
L.~Illusie.
\newblock D\'eformations de groupes de {B}arsotti-{T}ate (d'apr\`es {A}.
  {G}rothendieck).
\newblock {\em Ast\'erisque}, (127):151--198, 1985.
\newblock Seminar on arithmetic bundles: the Mordell conjecture (Paris,
  1983/84).

\bibitem{katzsarnak}
N.~M. Katz and P.~Sarnak.
\newblock {\em Random matrices, {F}robenius eigenvalues, and monodromy}.
\newblock American Mathematical Society, Providence, RI, 1999.

\bibitem{kowalskisieve}
E.~Kowalski.
\newblock The large sieve, monodromy and zeta functions of curves.
\newblock {\em J. Reine Angew. Math.}, 601:29--69, 2006.

\bibitem{launicolevasiu}
E.~Lau, M.-H. Nicole, and A.~Vasiu.
\newblock Stratifications of {N}ewton polygon strata and {T}raverso's
  conjectures for {$p$}-divisible groups.
\newblock {\em Ann. of Math. (2)}, 178(3):789--834, 2013.

\bibitem{liedtke11}
C.~Liedtke.
\newblock The {$p$}-torsion subgroup scheme of an elliptic curve.
\newblock {\em J. Number Theory}, 131(11):2064--2077, 2011.

\bibitem{moonengsas}
B.~Moonen.
\newblock Group schemes with additional structures and {W}eyl group cosets.
\newblock In {\em Moduli of abelian varieties (Texel Island, 1999)}, pages
  255--298. Birkh\"auser, Basel, 2001.

\bibitem{oortfoliations}
F.~Oort.
\newblock Foliations in moduli spaces of abelian varieties.
\newblock {\em J. Amer. Math. Soc.}, 17(2):267--296 (electronic), 2004.

\bibitem{oortminimal}
F.~Oort.
\newblock Minimal {$p$}-divisible groups.
\newblock {\em Ann. of Math. (2)}, 161(2):1021--1036, 2005.

\bibitem{serrezetaandl}
J.-P. Serre.
\newblock Zeta and {$L$} functions.
\newblock In {\em Arithmetical {A}lgebraic {G}eometry ({P}roc. {C}onf. {P}urdue
  {U}niv., 1963)}, pages 82--92. Harper \& Row, New York, 1965.

\bibitem{serregalcoh}
J.-P. Serre.
\newblock {\em Galois cohomology}.
\newblock Springer Monographs in Mathematics. Springer-Verlag, Berlin, english
  edition, 2002.
\newblock Translated from the French by Patrick Ion and revised by the author.

\bibitem{vasiulevelm}
A.~Vasiu.
\newblock Level {$m$} stratifications of versal deformations of {$p$}-divisible
  groups.
\newblock {\em J. Algebraic Geom.}, 17(4):599--641, 2008.

\bibitem{wedhorndimoort}
T.~Wedhorn.
\newblock The dimension of {O}ort strata of {S}himura varieties of
  {P}{E}{L}-type.
\newblock In {\em Moduli of abelian varieties (Texel Island, 1999)}, pages
  441--471. Birkh\"auser, Basel, 2001.

\end{thebibliography}

\end{document}